\DeclareMathAlphabet{\mathcal}{OMS}{cmsy}{m}{n} 
\DeclareSymbolFont{largesymbols}{OMX}{cmex}{m}{n}
\newcounter{num}      
\numberwithin{equation}{section}
\numberwithin{theorem}{section}
\numberwithin{lemma}{section}
\numberwithin{corollary}{section}
\journalname{lxajournal}
\begin{document}
\title{Suboptimal subspace construction for log-determinant approximation}


\author{Zongyuan Han \and Wenhao Li \and Yixuan Huang  \and Shengxin Zhu}

\institute{
           Zongyuan Han
           \at School of Mathematical Sciences, Beijing Normal University and Laboratory of Mathematics and Complex Systems, Ministry of Education, Beijing 100875, P.R. China
   \and
          Wenhao Li
          \at Guangdong Provincial Key Laboratory of Interdisciplinary Research and Application for Data Science, Department of Applied Mathematics, BNU-HKBU United International College, Zhuhai 519087, P.R. China
    \and
           Yixuan Huang
           \at School of Mathematical Sciences, Beijing Normal University and Laboratory of Mathematics and Complex Systems, Ministry of Education, Beijing 100875, P.R. China
    \and
           {\large{\ding{41}}}Shengxin Zhu\\
          \email Shengxin.Zhu@bnu.edu.cn
           \at Research Center for Mathematics, Beijing Normal University, Zhuhai 519087, P.R. China; Guangdong Provincial Key Laboratory of Interdisciplinary Research and Application for Data Science, Department of Applied Mathematics, BNU-HKBU United International College, Zhuhai 519087, P.R. China
}

\date{Received: date / Accepted: date}
\maketitle
\begin{abstract}
Variance reduction is a crucial idea for Monte Carlo simulation and the stochastic Lanczos quadrature method is a dedicated method to approximate the trace of a matrix function. Inspired by their advantages, we combine these two techniques to approximate the log-determinant of large-scale symmetric positive definite matrices. Key questions to be answered for such a method are how to construct or choose an appropriate projection subspace and derive guaranteed theoretical analysis. This paper applies some probabilistic approaches including the projection-cost-preserving sketch and matrix concentration inequalities to construct a suboptimal subspace. Furthermore, we provide some insights on choosing design parameters in the underlying algorithm by deriving corresponding approximation error and probabilistic error estimations. Numerical experiments demonstrate our method's effectiveness and illustrate the quality of the derived error bounds. 


\keywords{Log-determinant \and Variance reduction \and Stochastic trace estimation \and Projection-cost-preserving sketch \and Stochastic Lanczos quadrature \and Hanson-Wright inequalities 
\and Stable rank}

\noindent\textbf{Mathematics Subject Classifications (2020)} 65C05$\cdot$ 65D32$\cdot$ 65F15$\cdot$ 65F60 $\cdot$ 65G99 $\cdot$ 65Y20 $\cdot$ 68Q10 $\cdot$ 68Q87

\end{abstract}

\section{Introduction}\label{sec1}
The computation of log determinants of symmetric positive definite matrices is a fundamental problem in high-dimensional inference. It has widespread applications in fields such as Gaussian process kernel learning \cite{Rasmussen2006gaussian}, linear mixed models \cite{Zhu2018Essential}, Markov random fields \cite{Wainwright2006Log}, Bayesian inference \cite{MacKay2003Information}, information geometry \cite{Amari2016Information}, and others. 

One straightforward approach to computing the logarithm of determinants (or log determinants) is through factorization. A sophisticated multi-frontal Cholesky decomposition approach can be used to calculate the log determinant and its derivatives for very large-scale sparse matrices \cite{Zhu2017Fast,zhu2019sparse}. Such a method, together with other matrix analyses, works efficiently for linear mixed models and has been well implemented in stable software. However, when the matrix is dense, the Cholesky decomposition requires a time complexity of $O(n^{3})$ and storage requirements of $O(n^{2})$, making it computationally prohibitive for large-scale problems \cite{golub2013matrix}. 

Alternatively, iterative methods can be designed according to the well-known identity \cite[p168]{Golub2009Matrices}, 
\begin{equation}
    \label{eq1}
    \log \det(A)=\sum_{i=1}^{n}\log(\lambda_{i})={\mathrm{tr}}(\log(A)),
\end{equation}
for symmetric positive definite matrices. After this transformation, the calculation of the log determinants can be reformulated as the problem of estimating the trace of matrix logarithm function $\log(A)$.

Several approaches have been proposed for estimating the trace of matrix logarithms. These approaches can be broadly categorized based on the techniques employed, including Monte Carlo-based methods, polynomial approximation-based methods, subspace iteration-based methods, and methods that utilize Gaussian quadrature and Lanczos iteration.

First, the matrix trace estimation method based on Monte Carlo can be traced back to Girard’s literature \cite{girard1989fast}, in which he proposed a fast Monte Carlo algorithm for approximating the calculation of matrix traces in minimizing Generalized Cross-Validation (GCV) problems. It can be simply described as 
\begin{equation}
    \frac{1}{n}\mathrm{tr}(A)=\mathbb{E}(\boldsymbol{z}^{T}A\boldsymbol{z}/\boldsymbol{z}^{T}\boldsymbol{z})\approx \frac{1}{m}\sum_{i=1}^{m}\frac{(\boldsymbol{z}^{(i)})^{T}A\boldsymbol{z}^{(i)}}{(\boldsymbol{z}^{(i)})^{T}\boldsymbol{z}^{(i)}},
\end{equation}
where $A\in \mathbb{R}^{n\times n}$, $\boldsymbol{z}$ and $\boldsymbol{z}^{(i)}$ are stochastic vectors (also referred to as query vectors) with entries independently sampled from the standard normal distribution, $m$ is the number of query vectors. Hutchinson trace estimator \cite{Hutchinson1990A} simplified the above estimation implementation and satisfies a minimum variance criterion, which can be expressed as
\begin{equation}
    \mathrm{tr} (A) = \mathbb{E}(\boldsymbol{z}^T A \boldsymbol{z}) \approx \frac{1}{m}\sum_{i=1}^m (\boldsymbol{z}^{(i)})^{T} A \boldsymbol{z}^{(i)},
\end{equation}
where $\boldsymbol{z}$ and $\boldsymbol{z}^{(i)}$ are stochastic query vectors with entries independently sampled from the Rademacher distribution. Subsequently, other estimators have been proposed for estimating the trace using the same form, but with different distributions for random vectors. These include random Gaussian vectors \cite{silver1997calculation}, random phase vectors \cite{iitaka2004random}, and columns derived from a Hadamard matrix \cite{bekas2007estimator}, among others. In contrast to previous research that focused solely on analyzing the variance of estimators, the seminal work of \cite{avron2011randomized} provides the first comprehensive analysis of the bound of query complexity for estimators, i.e., the minimum number of matrix-vector multiplications (MVM) required to achieve a desired accuracy and success rate. This bound was further improved and extended by Roosta-Khorasani and Ascher \cite{RA15}. Recently, multi-level Monte Carlo methods \cite{Giles2015Multilevel} have been introduced in \cite{Eric2022A,Andreas2022A} to accelerate the convergence rate of the stochastic trace estimator.

Second, some papers study the trace of matrix logarithms using a polynomial approximation of the logarithmic function. Boutsidis et al. applied Taylor series expansions to the logarithmic function and used the Monte Carlo method to approximate the traces of a small number of matrix powers \cite{Boutsidis2017A}. However, the Taylor series is hard to be optimal from the viewpoint of polynomial approximation. Therefore, Han et al. \cite{Han2015Large} proposed a stochastic Chebyshev expansion method to accelerate the Taylor approximation. Based on the eigenvalue distribution prior, Wang and Peng designed a weighted orthogonal polynomial approximation algorithm for log-determinant computation. The three-term-recursion relation of orthogonal polynomials makes the algorithm computationally efficient \cite{Peng2018A}.

Third, some subspace iteration methods are also used to estimate the trace and log determinants. The essence of such an approach is to retain the larger eigenvalues while dropping the smaller ones. To calculate $\log(\det(A))$ for $A\in \mathbb{R}^{n\times n}$, Saibaba et al. \cite{Saibaba2017Random} obtained a surrogate matrix $T\in \mathbb{R}^{l\times l} (l\ll n)$ of matrix $A$ using power iterations and QR factorization. Compared to Monte Carlo-based trace estimators, this estimator can be computationally efficient when $A$ is a low-rank matrix. Similarly, Li et al. \cite{Li2021Randomized} obtained the surrogate matrix $T$ using the block Krylov subspace method. Theoretical analysis and numerical experiments show that they achieve better error bounds than those in \cite{Saibaba2017Random}.

Fourth, Bai and Golub \cite{bai1996bounds} presented deterministic upper and lower bounds for $\mathrm{tr}(A^{-1})$ and $\log\det(A)$ applying Gaussian quadrature and related theory. They also proposed \cite{Bai1996some} probabilistic upper and lower bounds for large sparse matrices using integral approximation and Hoeffding’s inequality, the estimation process features the Hutchinson trace estimator and the Lanczos method to construct a Gaussian quadrature, which is currently known as the stochastic Lanczos quadrature (SLQ).  In \cite{Ubaru2017Fast}, Ubaru et al. named the SLQ method for estimating $\mathrm{tr}(f(A))$ and provides the rigorous theoretical analysis that is missing in \cite{Bai1996some}. \cite{li2023analysis} further analyzes SLQ in the context of asymmetric quadrature nodes. In the following text, we will discuss the relevant details of the SLQ method in-depth and in relation to our problems.

Recently, Meyer et.al \cite{hutch++} proposed a stochastic trace estimator named Hutch++, which combined the Hutchinson trace estimator with subspace projection. This new estimator can be considered as a variance reduction version (also used e.g. in \cite{gambhir2017deflation,lin2017randomized,baston2022stochastic}) of the Hutchinson trace estimator \cite{Hutchinson1990A}, it reduced the query complexity from $O(1/\epsilon^{2})$ (as in \cite{avron2011randomized,RA15}) to $O(1/\epsilon)$. Later on, \cite{persson2022improved} developed an adaptive version of Hutch++, which near-optimally divides the MVM between the two phases of stochastic trace estimation and subspace projection. Similarly, \cite{chen2022krylov} proposed a variance reduction trace estimation scheme that adaptively allocates the numbers of Block Krylov subspace iterations and query vectors. Different from Hutch++, \cite{epperly2023xtrace} designed an exchangeable trace estimator named XTrace, which exploited both variance reduction and the exchangeability principle, in short, the test vectors for low-rank approximation and for estimating the trace of the residual are the same.

Unlike \cite{Ubaru2017Fast}, we combine the variance reduction technique with the SLQ method  to estimate the trace of matrix logarithm. Our method contains four key ingredients. The first is the Hutchinson trace estimator, used to estimate the trace. The second is the stochastic Lanczos quadrature method, used to efficiently approximate quadratic forms.  The third are concentration inequalities, including Markov and Hanson-Wright inequalities. The fourth is the projection-cost-preserving sketch, used to bound the size of the Gaussian random matrix and the number of Hutchinson query vectors. This is one of the significant differences from \cite{Ubaru2017Fast}. Furthermore, we explicitly present the bounds of all design parameters involved in our method, such as the dimension of the projection subspace, the number of Hutchinson query vectors, and the iterations of the Lanczos procedure. This provides practitioners with more guidance than asymptotic bounds in the form of $O(\cdot)$ or $\Omega(\cdot)$.

The paper is organized as follows. Section \ref{sec:preli} presents the preliminaries. In Section \ref{sec:approx}, we provide the main idea for approximating $\mathrm{tr}(\log(A))$ and state the main theorem. Section \ref{sec:error} presents an error analysis of the relative probability error bound and explicit bounds for the relevant parameters. The performance evaluation of our method is given in Section \ref{sec:performance}. Finally, we present our concluding remarks.

\section{Preliminaries}
\label{sec:preli}
The main result of this paper is given by Theorem \ref{the_main_result} and the following figure shows the roadmap for proving Theorem \ref{the_main_result}.

\begin{figure}[H]
    \centering	
    \includegraphics[width=.6\textwidth]{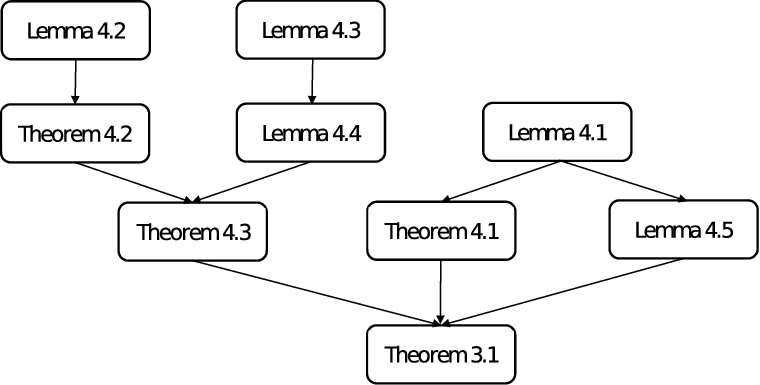}
    \caption{The roadmap for proving Theorem \ref{the_main_result}. }
\end{figure}
Let $\|\cdot\|_{2}$ denote the Euclidean norm for a vector and the spectral norm for a matrix. Let $\|\cdot\|_{F}$ denote the Frobenius norm (F-norm) for a matrix. Let $A\in \mathbb{R}^{n\times n}$ be a symmetric positive definite (SPD) matrix with eigendecomposition $A=U\Lambda U^{T}$, where $U\in \mathbb{R}^{n\times n}$ is orthogonal and $\Lambda = {\mathrm{diag}}(\lambda_{1},\ldots, \lambda_{n})\in \mathbb{R}^{n\times n}$ is a diagonal matrix of eigenvalues with non-increasing ordering. Let $A_{k}=\arg\min _{{\mathrm{rank}}(B)= k}\|A-B\|_{F}$ be the best rank-$k$ approximation to $A$. Matrix function $f(A)\triangleq\log(A)$, and its best rank-$k$ approximation is denoted by $A_{k}(f)={\arg\min}_{\mathrm{rank}(B)=k}\|f(A)-B\|_{F}$. By the Eckart-Young theorem \cite{Eckart1936The}, $A_{k}(f)=U_{k}U_{k}^{T}f(A)$ where $U_{k}$ contains the first $k$ columns of $U$.

To prepare for the subsequent analysis, we will introduce some related definitions and lemmas.

If matrix $S$ can project a high-dimensional point cloud $E$ onto a lower-dimensional space while approximately preserving vector norms, then $S$ is called a Johnson-Lindenstrauss Embedding (JLE). A more detailed description is as follows.

\begin{definition}(JLE \cite{Johnson1986Extensions,Bamberger2021Optimal})
    Let $S\in \mathbb{R}^{k\times n}$ be a random matrix where $k<n$, $p\in \mathbb{N}$ and $\epsilon, \delta\in (0,1)$. We say that $S$ is a $(p,\epsilon, \delta)$-JLE if for any subset $E\subseteq \mathbb{R}^{n}$ with $|E|=p$ and
    \begin{linenomath}
    $$(1-\epsilon)\|\boldsymbol{x}\|_{2}^{2}\leq \|S\boldsymbol{x}\|_{2}^{2} \leq (1+\epsilon)\|\boldsymbol{x}\|_{2}^{2},$$
    \end{linenomath}
    holds simulatneously for all $\boldsymbol{x}\in E$ with a probability of at least $1-\delta$.
\end{definition}

Furthermore, if $S$ is a JLE of the range of a matrix $M$, then $S$ is called the subspace embedding of $M$, the specific definition is as follows.

\begin{definition}(Subspace embedding \cite{Cameron2020Proj})
    $S\in \mathbb{R}^{k\times m}$ is an $\epsilon$-subspace embedding for $M\in \mathbb{R}^{m\times n}$, if $\forall \boldsymbol{x}\in \mathbb{R}^{n}$,
     \begin{linenomath}
    $$(1-\epsilon)\|M\boldsymbol{x}\|_{2}^{2}\leq \|SM\boldsymbol{x}\|_{2}^{2}\leq (1+\epsilon)\|M\boldsymbol{x}\|_{2}^{2}.$$
    \end{linenomath}
\end{definition}

Similar to JLE, Kane and Nelson \cite{Daniel2014Sparser} defined a high-order moment form of norm preservation for a projected vector, known as JL moments.

\begin{definition}(JL moments \cite{Cameron2020Proj,Daniel2014Sparser})
    Matrix $S\in \mathbb{R}^{n\times q}$ satisfies the $(\epsilon,\delta,\ell)$-JL moments if for any $\boldsymbol{x}\in \mathbb{R}^{n}$ with $\|\boldsymbol{x}\|_{2}=1$,
    \begin{linenomath}
$$\mathbb{E}_{S}\left|\|\boldsymbol{x}^{T}S\|_{2}^{2}-1\right|^{\ell}\leq \epsilon^{\ell}\cdot \delta.$$
\end{linenomath}
\end{definition}

For any matrix $A\in \mathbb{R}^{n\times d}$, if there is a matrix $\tilde{A}\in \mathbb{R}^{n\times d'} (d'\ll d)$ that preserves the distance between $A$'s columns and any $k$-dimensional subspace, then $\tilde{A}$ can be used as a surrogate for $A$ to solve certain low-rank optimization problems. $\tilde{A}$ is called a projection-cost-preserving sketch (PCPS) of $A$. A more detailed description is as follows. 

\begin{definition}(PCPS \cite{Michael2015Dimen})
    \label{projection_cost_preserving_sketch}
    $\tilde{A}\in \mathbb{R}^{n\times d'}$ is a rank-$k$ PCPS of $A\in \mathbb{R}^{n\times d}$ $(d\gg d')$ with error $0\leq \epsilon < 1$, if for all rank-$k$ orthogonal projection matrices $P\in \mathbb{R}^{n\times n}$,
    \begin{equation}
		(1-\epsilon)\|A-PA\|_{F}^{2}\leq \|\tilde{A}-P\tilde{A}\|_{F}^{2}+c\leq (1+\epsilon)\|A-PA\|_{F}^{2},
    \end{equation}
    for some fixed non-negative constant $c$ that may depend on $A$ and $\tilde{A}$ but independent of $P$. $\tilde{A}$ is also called an $(\epsilon, c, k)$-PCPS of $A$.
\end{definition}

The constant $c$ in Definition \ref{projection_cost_preserving_sketch} is used to control the tightness of the bound or the similarity between $A$ and $\tilde{A}$. In this paper, we set $c$ to $0$ to get a tight bound. 

Next, we present two important inequalities that will be used later in this paper, along with some remarks.

\begin{lemma}(\cite{hutch++})
    \label{k_rank_app}
    Let $A$ be a symmetric positive semidefinite matrix and $A_{k}$ be the best rank-$k$ approximation of $A$. Then, $\|A-A_{k}\|_{F}\leq \mathrm{tr}(A)/\sqrt{k}$.
\end{lemma}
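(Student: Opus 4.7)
The plan is to diagonalize and reduce everything to a one-variable inequality about the spectrum. Since $A$ is symmetric positive semidefinite, write its eigenvalues in non-increasing order as $\lambda_1 \geq \lambda_2 \geq \cdots \geq \lambda_n \geq 0$. By the Eckart--Young theorem, the best rank-$k$ approximation in Frobenius norm discards the $n-k$ smallest eigenvalues, so the tail of the spectrum is exactly what I need to control:
\begin{equation*}
\|A - A_k\|_F^2 = \sum_{i=k+1}^n \lambda_i^2.
\end{equation*}
The target inequality, after squaring, becomes $\sum_{i=k+1}^n \lambda_i^2 \leq \bigl(\mathrm{tr}(A)\bigr)^2 / k$.

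The key observation is a standard ordering argument: because the eigenvalues are sorted, $\lambda_{k+1}$ is bounded by the average of the top $k$ eigenvalues, and hence by $\mathrm{tr}(A)/k$. More precisely, $k\lambda_{k+1} \leq \sum_{i=1}^k \lambda_i \leq \mathrm{tr}(A)$, so $\lambda_{k+1} \leq \mathrm{tr}(A)/k$.

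Now I would bound the tail sum of squares by pulling out one factor:
\begin{equation*}
\sum_{i=k+1}^n \lambda_i^2 \;\leq\; \lambda_{k+1} \sum_{i=k+1}^n \lambda_i \;\leq\; \frac{\mathrm{tr}(A)}{k}\cdot \mathrm{tr}(A) \;=\; \frac{\bigl(\mathrm{tr}(A)\bigr)^2}{k},
\end{equation*}
where the first inequality uses $\lambda_i \leq \lambda_{k+1}$ for $i \geq k+1$, and the second uses the bound on $\lambda_{k+1}$ together with $\sum_{i=k+1}^n \lambda_i \leq \sum_{i=1}^n \lambda_i = \mathrm{tr}(A)$ (valid since $\lambda_i \geq 0$). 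Taking square roots gives the claim.

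There is essentially no obstacle: positive semidefiniteness is what makes all $\lambda_i$ nonnegative, which is used both to identify $\mathrm{tr}(A)$ with $\sum \lambda_i$ (no cancellation) and to drop the first $k$ terms from the tail sum. The only subtlety worth noting in the write-up is that both uses of nonnegativity must be invoked explicitly, since for a general symmetric matrix the inequality would fail (e.g.\ a trace-zero matrix with nonzero spectrum).
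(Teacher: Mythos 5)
Your proof is correct and is essentially the standard argument from the cited reference \cite{hutch++}: the paper itself states this lemma without proof, and the source proves it exactly as you do, via $\lambda_{k+1}\leq \mathrm{tr}(A)/k$ and $\sum_{i>k}\lambda_i^2\leq \lambda_{k+1}\sum_{i>k}\lambda_i\leq \mathrm{tr}(A)^2/k$. Nothing further is needed.
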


\begin{lemma}(Hanson-Wright inequality \cite{hanson1971bound,Jelani2015Dimen})
    \label{hanson_wright_inequality}
    Let $\boldsymbol{z}=(z_{1},\ldots,z_{n})^{T}$ represent a vector with entries that are independently sampled from a Rademacher distribution and $A\in \mathbb{R}^{n\times n}$ be a real and symmetric matrix, for all $p\geq 1$
    \begin{linenomath}
    \begin{equation*}
    \|\boldsymbol{z}^{T}A\boldsymbol{z}-\mathbb{E}\boldsymbol{z}^{T}A\boldsymbol{z}\|_{p}\lesssim \sqrt{p}\cdot\|A\|_{F}+p\cdot \|A\|_{2},
    \end{equation*}
    \end{linenomath}
    where $\|\cdot\|_{p}$ denotes $(\mathbb{E}|\cdot|^{p})^{1/p}$.
\end{lemma}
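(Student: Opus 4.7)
The plan is to follow the classical decoupling-plus-Khintchine proof of Hanson-Wright, exploiting the Rademacher assumption to eliminate the diagonal term at the outset.

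First, I would split the centered quadratic form into diagonal and off-diagonal pieces. Because $z_{i}^{2}=1$ almost surely for Rademacher entries, the diagonal contribution satisfies $\sum_{i} A_{ii} z_{i}^{2} = \mathrm{tr}(A) = \mathbb{E}[\boldsymbol{z}^{T}A\boldsymbol{z}]$, so it cancels exactly and
\begin{equation*}
\boldsymbol{z}^{T}A\boldsymbol{z}-\mathbb{E}\boldsymbol{z}^{T}A\boldsymbol{z} \;=\; \sum_{i\neq j} A_{ij} z_{i} z_{j}.
\end{equation*}
This reduction, available for symmetric distributions with $z_{i}^{2}\equiv 1$, sidesteps the separate diagonal analysis that is needed in the general sub-Gaussian Hanson-Wright proof.

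Second, I would apply the de la Pe\~{n}a--Gin\'{e} decoupling inequality to pass from the off-diagonal quadratic form in a single copy of $\boldsymbol{z}$ to a bilinear form in two independent Rademacher vectors $\boldsymbol{z},\boldsymbol{z}'$, at the cost of a universal constant:
\begin{equation*}
\Bigl\|\sum_{i\neq j} A_{ij} z_{i} z_{j}\Bigr\|_{p} \;\lesssim\; \Bigl\|\sum_{i,j} A_{ij} z_{i} z'_{j}\Bigr\|_{p} \;=\; \|\boldsymbol{z}^{T} A \boldsymbol{z}'\|_{p}.
\end{equation*}
Then, conditional on $\boldsymbol{z}'$, the expression $\boldsymbol{z}^{T} A \boldsymbol{z}' = \sum_{i} z_{i}(A\boldsymbol{z}')_{i}$ is a Rademacher sum with fixed coefficients $(A\boldsymbol{z}')_{i}$, and Khintchine's inequality gives $\bigl\|\boldsymbol{z}^{T}A\boldsymbol{z}' \mid \boldsymbol{z}'\bigr\|_{p} \lesssim \sqrt{p}\,\|A\boldsymbol{z}'\|_{2}$. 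Taking the outer $L^{p}$ norm and using Jensen yields
\begin{equation*}
\|\boldsymbol{z}^{T}A\boldsymbol{z}'\|_{p} \;\lesssim\; \sqrt{p}\cdot \bigl(\mathbb{E}\,\|A\boldsymbol{z}'\|_{2}^{p}\bigr)^{1/p}.
\end{equation*}

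The main obstacle, and the place where the $p\cdot\|A\|_{2}$ term must appear, is bounding $\bigl(\mathbb{E}\,\|A\boldsymbol{z}'\|_{2}^{p}\bigr)^{1/p}$. The map $\boldsymbol{z}'\mapsto \|A\boldsymbol{z}'\|_{2}$ is convex and $\|A\|_{2}$-Lipschitz with $\mathbb{E}\|A\boldsymbol{z}'\|_{2}\leq (\mathbb{E}\|A\boldsymbol{z}'\|_{2}^{2})^{1/2}=\|A\|_{F}$, so by sub-Gaussian concentration for Lipschitz functions of Rademacher variables (Talagrand/Ledoux) one obtains $\bigl\|\|A\boldsymbol{z}'\|_{2}\bigr\|_{p} \lesssim \|A\|_{F} + \sqrt{p}\,\|A\|_{2}$. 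Substituting this back gives
\begin{equation*}
\|\boldsymbol{z}^{T}A\boldsymbol{z}-\mathbb{E}\boldsymbol{z}^{T}A\boldsymbol{z}\|_{p} \;\lesssim\; \sqrt{p}\cdot\|A\|_{F} + p\cdot\|A\|_{2},
\end{equation*}
which is the claimed bound. An alternative to the Lipschitz-concentration step is to iterate the Hanson-Wright argument on the quadratic form $(\boldsymbol{z}')^{T} A^{2} \boldsymbol{z}'$, obtaining a recursive estimate that can be closed by a fixed-point argument; either route delivers the same inequality up to universal constants, which is all that is needed since the statement is written with $\lesssim$.
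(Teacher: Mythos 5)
The paper does not prove this lemma at all: it is quoted verbatim from the cited references, so there is no internal proof to compare against. Your argument is correct and is essentially the standard proof of the moment-form Hanson--Wright bound for Rademacher chaos (and close to the one in the cited Nelson notes): the diagonal cancellation using $z_i^2=1$, de la Pe\~na--Gin\'e decoupling, conditional Khintchine giving $\sqrt{p}\,\|A\boldsymbol{z}'\|_2$, and then Talagrand-type concentration for the convex $\|A\|_2$-Lipschitz map $\boldsymbol{z}'\mapsto\|A\boldsymbol{z}'\|_2$ (or, alternatively, the recursive estimate via $(\boldsymbol{z}')^{T}A^{2}\boldsymbol{z}'$, using $\|A^{2}\|_F\le\|A\|_2\|A\|_F$ and $\|A^{2}\|_2=\|A\|_2^{2}$) yielding $\|A\|_F+\sqrt{p}\,\|A\|_2$, which combines to $\sqrt{p}\,\|A\|_F+p\,\|A\|_2$. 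One small precision: the decoupling inequality applies to the diagonal-free bilinear form, so you should pass to $\sum_{i\neq j}A_{ij}z_i z_j'$ rather than $\boldsymbol{z}^{T}A\boldsymbol{z}'$; the omitted decoupled diagonal $\sum_i A_{ii}z_i z_i'$ is an ordinary Rademacher sum bounded by $\sqrt{p}\,\|\mathrm{diag}(A)\|_F\le\sqrt{p}\,\|A\|_F$ via Khintchine, and removing the diagonal only decreases $\|A\|_F$ and at most doubles $\|A\|_2$, so the claimed bound is unaffected up to the universal constant hidden in $\lesssim$.
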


\begin{remark}
    In this paper, we suppose matrix $A\in \mathbb{R}^{n\times n}$ is an SPD matrix with $\lambda_{\min}(A)=\lambda_{n}\geq 1$.  If $\lambda_{\min}(A) < 1$, one may let $\hat{A} \triangleq A/\lambda_{\min}(A)$, whose minimum eigenvalue is greater than or equal to $1$, indicating that
    \begin{equation}
        \label{eq2}
        \log\det(A)=\log\det(\lambda_{\min}(A)\hat{A})=n\log(\lambda_{\min}(A))+{\mathrm{tr}}(\log(\hat{A})),
    \end{equation}
    then $\mathrm{tr}(\log(\hat{A}))$ is the crucial problems as we considered.
\end{remark}

\begin{remark}
     Moreover, if $A$ is asymmetric and non-singular, then its logarithmic determinant (absolute value) can be obtained through the log-determinant problem of SPD matrix $A^{T}A$ since
     \begin{linenomath}
    $$
    \log (|\det(A)|) = \frac{1}{2}\log(\det(A)^{2}) = \frac{1}{2}\log (\det(A^{T}A)),
    $$
    \end{linenomath}
    which is transformed into the problem described in (\ref{eq2}).
\end{remark}

\section{Approximation of tr(f(A))}
\label{sec:approx}
Let $S\in \mathbb{R}^{n\times q}$ be a random matrix whose entries are independent normal random variables, and $Q\in \mathbb{R}^{n\times k}$ consist of $k$ principal orthonormal bases of the column space spanned by $f(A)S$.

Enlighted by the variance reduction technique introduced in \cite{hutch++}, we separate $f(A)$ into its projection onto the subspace spanned by $Q$ and its orthogonal complement, that is,
\begin{linenomath}
\begin{equation}
    \label{eq3}
    \begin{array}{ll}
		{\mathrm{tr}}\left(f(A)\right) &={\mathrm{tr}}(QQ^{T}f(A))+{\mathrm{tr}}((I-QQ^{T})f(A))\\
		&={\mathrm{tr}}(Q^{T}f(A)Q)+{\mathrm{tr}}((I-QQ^{T})f(A)(I-QQ^{T}))\\
		&=\underbrace{{\mathrm{tr}}(Q^{T}f(A)Q)}_{\text{first part}}+\underbrace{{\mathrm{tr}}(\Delta)}_{\text{second part}}. \\
    \end{array}
\end{equation}
\end{linenomath}
Let $\Delta \triangleq (I-QQ^{T})f(A)(I-QQ^{T})$. The second equality holds due to the cyclic property of the trace and the idempotency of $I-QQ^{T}$, that is, $I-QQ^{T}=(I-QQ^{T})^{2}$.

These two parts in (\ref{eq3}) will be estimated separately. For the first part, we will use the $(m+1)$-step Lanczos quadrature approximation method to estimate the trace. For the second part, we will combine the $(m+1)$-step Lanczos quadrature approximation method with the $N$-query Hutchinson trace estimator to estimate the trace of $\Delta$. The final sum of these two estimated results will be represented by the symbol $\Gamma$.

The following theorem is the main result of this paper, which is similar in form to Theorem 4.1 in \cite{Ubaru2017Fast}. As our trace estimation method adopts the variance reduction technique, new design parameters $k$ and $q$ are included, and the explicit lower bounds for all these parameters are presented in the theorem.

\begin{theorem}
    \label{the_main_result}
    Given $\epsilon,\delta \in (0,1)$, an SPD matrix $A\in \mathbb{R}^{n\times n}$ with its minimum eigenvalue $\lambda_{\min}\geq 1$. Let $S\in R^{n\times q}$ be a random matrix and $Q\in \mathbb{R}^{n\times k}$ be composed of the $k$ principal orthonormal bases of the column space spanned by $f(A)S$. Thus, if the following inequality is satisfied
    \begin{itemize}
		\item the column number of $Q$ satisfies $k\geq 16(1+\epsilon)/(1-\epsilon)$,
		\item the column number of $S$ satisfies $q\geq 288k/(\epsilon^{2}\delta)$,
		\item the number of query vectors $N \geq \left(1+\sqrt{1+4\epsilon\sqrt{\delta}}\right)^{2}/(2\epsilon^{2}\delta),$
         \item the Lanczos iteration parameter $m'\geq \frac{1}{2}\log(2kC_{\rho}/\varepsilon)/\log(\rho)$ for the first part and  $m\geq \frac{1}{2}\log(4nC_{\rho}/\varepsilon)/\log(\rho)$ for the second part, where $C_{\rho}=4M_{\rho}/(C(\rho^{2}-\rho))$ and $C=(n-1)f(\lambda_{\min})+f(\lambda_{\max})$, $M_{\rho}=|\log(\lambda_{\min}/2)|+\pi$ and 
  \begin{linenomath}
  $$\rho=(\lambda_{\max}+\sqrt{2\lambda_{\max}\lambda_{\min}-\lambda_{\min}^{2}})/(\lambda_{\max}\lambda_{\min}),$$
  \end{linenomath}
    \end{itemize}   
    then
    \begin{linenomath}
    $$\mathbb{P}\left\{\left|\mathrm{tr}(f(A))-\Gamma\right|\leq \epsilon |\mathrm{tr}(f(A))|\right\}\geq 1-\delta,$$
    \end{linenomath}
    where $\Gamma$ is the final estimation of $\mathrm{tr}(f(A))$.
\end{theorem}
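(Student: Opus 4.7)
The plan is to decompose the total error $|\mathrm{tr}(f(A))-\Gamma|$ according to the split (\ref{eq3}) and allocate a fraction of the target budget $\epsilon\,|\mathrm{tr}(f(A))|$ to each of its three sources: (i) the Lanczos quadrature error on the $k$ diagonal entries of $Q^{T}f(A)Q$ that make up the first term; (ii) the Hutchinson estimation error for $\mathrm{tr}(\Delta)$; and (iii) the Lanczos quadrature error on each of the $N$ quadratic forms $\boldsymbol{z}^{(i)T}\Delta\boldsymbol{z}^{(i)}$ used to realize Hutchinson on $\Delta$. A union bound over the corresponding failure events then delivers the success probability $1-\delta$, provided each individual deviation is controlled with slack $\delta/\mathrm{const}$.

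For source (ii), I would apply the Hanson--Wright inequality (Lemma \ref{hanson_wright_inequality}) to the Hutchinson estimator on $\Delta$, yielding a tail bound in $\|\Delta\|_{F}$ (with a lower-order $\|\Delta\|_{2}$ term). The threshold $N\geq (1+\sqrt{1+4\epsilon\sqrt{\delta}})^{2}/(2\epsilon^{2}\delta)$ is precisely what is needed to turn the Markov/Hanson--Wright tail into a deviation of size $\epsilon\,|\mathrm{tr}(f(A))|$. The decisive step is then controlling $\|\Delta\|_{F}$ via the PCPS machinery: since $S$ is Gaussian with $q\geq 288k/(\epsilon^{2}\delta)$ columns, standard results (Definition \ref{projection_cost_preserving_sketch} and the JL-moments argument) guarantee with the required probability that $f(A)S$ is an $(\epsilon,0,k)$-PCPS for $f(A)$, so that
\begin{equation*}
\|(I-QQ^{T})f(A)\|_{F}^{2} \leq (1+\epsilon)\|f(A)-f(A)_{k}\|_{F}^{2}.
\end{equation*}
Chaining this with Lemma \ref{k_rank_app} gives $\|\Delta\|_{F}\leq \sqrt{1+\epsilon}\,\mathrm{tr}(f(A))/\sqrt{k}$, and the choice $k\geq 16(1+\epsilon)/(1-\epsilon)$ shrinks this enough to make the Hanson--Wright bound proportional to $\epsilon\,|\mathrm{tr}(f(A))|$.

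For sources (i) and (iii), I would invoke the standard convergence theory of Gauss quadrature for functions analytic in an elliptic region containing $[\lambda_{\min},\lambda_{\max}]$: the $m$-step Lanczos quadrature estimate of a quadratic form $\boldsymbol{u}^{T}f(A)\boldsymbol{u}$ incurs error at most $M_{\rho}\rho^{-2m}\|\boldsymbol{u}\|_{2}^{2}$ up to a constant depending on the ellipse parameter $\rho$ stated in the theorem. Summing this over the $k$ orthonormal columns of $Q$ (for (i)) and the $N$ Rademacher vectors with $\|\boldsymbol{z}^{(i)}\|_{2}^{2}=n$ (for (iii)), and dividing by the lower bound $C=(n-1)f(\lambda_{\min})+f(\lambda_{\max})\leq|\mathrm{tr}(f(A))|$, shows that the stated thresholds $m'\geq \tfrac{1}{2}\log(2kC_{\rho}/\varepsilon)/\log(\rho)$ and $m\geq \tfrac{1}{2}\log(4nC_{\rho}/\varepsilon)/\log(\rho)$ are exactly what is needed to drive the aggregate quadrature error below its allotted share of $\epsilon\,|\mathrm{tr}(f(A))|$.

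The principal technical obstacle is step (ii): verifying that the Gaussian sketch of width $q$ actually furnishes a PCPS for $f(A)$, since this requires combining an approximate matrix-multiplication bound for Gaussian sketches with a JL-moment argument to translate mere norm-preservation into the projection-cost-preserving form of Definition \ref{projection_cost_preserving_sketch}. Carefully tracing the constants along the chain ``$q$ columns $\Rightarrow$ $(\epsilon,0,k)$-PCPS $\Rightarrow$ $\|\Delta\|_{F}$ bound $\Rightarrow$ Hanson--Wright deviation'' in such a way that the thresholds land exactly at the stated $k,q,N$ values, while still leaving room for the Lanczos error budget, is where the bulk of the bookkeeping resides.
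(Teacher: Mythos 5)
Your proposal follows essentially the same route as the paper: the same decomposition of \eqref{eq3} into a deterministic Lanczos bound for $\mathrm{tr}(Q^{T}f(A)Q)$, a deterministic Lanczos bound for the $N$ Hutchinson quadratic forms on $\Delta$, and a Hanson--Wright/Chebyshev tail for the Hutchinson deviation with $\|\Delta\|_{F}$ controlled via the JL-moment/PCPS argument, all combined by the triangle inequality. The only small slip is that the PCPS chain gives $\|f(A)-QQ^{T}f(A)\|_{F}^{2}\leq \frac{1+\epsilon}{1-\epsilon}\|f(A)-A_{k}(f)\|_{F}^{2}$ rather than the factor $1+\epsilon$ you wrote (since $QQ^{T}$ is optimal only for the sketch, not for $f(A)$), which is precisely why the threshold $k\geq 16(1+\epsilon)/(1-\epsilon)$ appears; with that correction your constants land exactly where the paper's do.
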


The proof of this theorem is deferred to Section \ref{subsec:total}. To prove the theorem, we first derive the approximation error bound of the second part in \eqref{eq3}, followed by the error bound of the first part.

\section{The approximate error analysis}
\label{sec:error}
To approximate $\mathrm{tr}(\Delta)$ in (\ref{eq3}), we employ the SLQ method proposed by \cite{Ubaru2017Fast}. The approximation process is divided into two stages. 

In the first stage, ${\mathrm{tr}}(\Delta)$ is approximated by the Hutchinson trace estimator with $N$-query random Rademacher vectors $Z=[\boldsymbol{z}^{(1)},\ldots,\boldsymbol{z}^{(N)}]$. We use the notation $H^{N}(\Delta)$ to represent this approximation result,
\begin{equation}
    \label{eq4}
    H^{N}(\Delta) \triangleq  \frac{1}{N}\sum\limits_{i=1}^{N}(\boldsymbol{z}^{(i)})^{T}\Delta\boldsymbol{z}^{(i)} \approx {\mathrm{tr}}\left(\Delta \right).
\end{equation}
Let $\boldsymbol{v}^{(i)}=\frac{(I-QQ^{T})\boldsymbol{z}^{(i)}}{\|(I-QQ^{T})\boldsymbol{z}^{(i)}\|_{2}}$ and $\boldsymbol{\mu}^{(i)}=U^{T}\boldsymbol{v}^{(i)}=[\mu_{1}^{(i)},\ldots,\mu_{n}^{(i)}]^{T}$. Recall that $A=U\Lambda U^{T}$, then
\begin{linenomath}
\begin{align*}
		H^{N}(\Delta) &=\frac{1}{N}\sum\limits_{i=1}^{N}\|(I-QQ^{T})\boldsymbol{z}^{(i)}\|_{2}^{2}(\boldsymbol{v}^{(i)})^{T}f(A)\boldsymbol{v}^{(i)}\\
		&=\frac{1}{N}\sum\limits_{i=1}^{N}\|(I-QQ^{T})\boldsymbol{z}^{(i)}\|_{2}^{2}(\boldsymbol{v}^{(i)})^T U f(\Lambda) U^T \boldsymbol{v}^{(i)} \\ 
		&=\frac{1}{N}\sum\limits_{i=1}^{N}\|(I-QQ^{T})\boldsymbol{z}^{(i)}\|_{2}^{2}\sum\limits_{j=1}^{n}f(\lambda_{j})({\mu_{j}^{(i)}})^{2}\\
		& = \frac{1}{N}\sum\limits_{i=1}^{N}\|(I-QQ^{T})\boldsymbol{z}^{(i)}\|_{2}^{2}\int_{\lambda_{n}}^{\lambda_{1}}f(t)d\mu^{(i)}(t).
\end{align*}
\end{linenomath}
In the last equality, we view $\sum_{j=1}^{n}f(\lambda_{j})(\mu_{j}^{(i)})^{2}$ as a Riemann-Stieltjes integral with the form of $\int_{\lambda_{n}}^{\lambda_{1}}f(t)d\mu^{(i)}(t)$, where the measure $\mu^{(i)}(t)$ is a piecewise constant function defined as \cite[pp. 9]{Golub2009Matrices}
\begin{equation}
    \label{measure_function}
    \mu^{(i)}(t)=\left\{\begin{array}{ll}
    0, &\quad \mbox { if } t<\lambda_{n}, \\
    \sum_{j=i}^{n} (\mu_{j}^{(i)})^{2}, &\quad \mbox { if } \lambda_{i} \leq t<\lambda_{i-1}, i=2, \ldots, n, \\
    \sum_{j=1}^{n} (\mu_{j}^{(i)})^{2}, &\quad \mbox { if } \lambda_{1} \leq t.
    \end{array}\right.
\end{equation}

As a Riemann-Stieltjes integral $\int_{\lambda_{n}}^{\lambda_{1}}f(t)d\mu(t)$ can be estimated by applying the Gaussian quadrature rule, that is
\begin{equation}
    \label{gauss_quadrature_approx}	            
    \boldsymbol{v}^{T}f(A)\boldsymbol{v}=\int_{\lambda_{n}}^{\lambda_{1}}f(t)d\mu(t)\approx \sum_{k=0}^{m}\tau_{k}f(\theta_{k}),
\end{equation}
where $\{\tau_{k}\}$ are the weights and $\{\theta_{k}\}$ are the nodes of the $(m+1)$-point Gaussian quadrature \cite{golub1969calculation}. Moreover, let $T_{m+1}$ be the Jacobi matrices derived by $(m+1)$-steps Lanczos algorithm \cite[p.39]{Golub2009Matrices} for matrix $A$ and any given vector $\boldsymbol{v}$, the quadrature weights $\tau_{k}=[\boldsymbol{e}_{1}^{T}\boldsymbol{y}_{k}]^{2}$ and nodes $\theta_k$ are extracted from the eigenpairs $\{(\theta_{k},\boldsymbol{y}_{k}),k=0,1,\ldots,m\}$ of $T_{m+1}$. The approximate method described by the formula \eqref{gauss_quadrature_approx} is called $(m+1)$-steps Lanczos quadrature method.

Thus, in the second stage, we approximate $H^{N}(\Delta)$ with $(m+1)$-steps Lanczos quadrature method, and use the notation $L_{m+1}^{N}(\Delta)$ to represent this approximation result,
\begin{equation}
    L_{m+1}^{N}(\Delta) \triangleq  \dfrac{1}{N}\sum_{i=1}^{N}\|(I-QQ^{T})\boldsymbol{z}^{(i)}\|_{2}^{2}\sum_{k=0}^{m}\tau_{k}^{(i)}f(\theta_{k}^{(i)}) \approx  H^{N}(\Delta).
\end{equation}

There are two approximation errors associated with the calculation of $\mathrm{tr}(\Delta)$, corresponding to the two stages of the approximation process. The first stage error, known as the Lanczos quadrature approximation error, arises from the difference between $H^{N}(\Delta)$ and $L^{N}_{m+1}(\Delta)$ and is given by $|H^{N}(\Delta)-L_{m+1}^{N}(\Delta)|$. The second stage error, known as the Hutchinson estimation error, arises from the difference between $\mathrm{tr}(\Delta)$ and $H^{N}(\Delta)$ and is given by $|H^{N}(\Delta)-\mathrm{tr}(\Delta)|$. We will analyze each of these errors in the following subsections.

\subsection{Lanczos quadrature approximation error}
\label{subsec:Lanczos}
As $\|(I-QQ^{T})\boldsymbol{z}^{(i)}\|_{2}^{2}\leq \|I-QQ^{T}\|_{2}^{2}\|\boldsymbol{z}^{(i)}\|_{2}^{2}=n$, the Lanczos quadrature approximation error is bounded by 
\begin{linenomath}
    \begin{align*}
		&\left|H^{N}(\Delta)-L_{m+1}^{N}(\Delta)\right|\\
		&=\left|\dfrac{1}{N}\sum\limits_{i=1}^{N}\|(I-QQ^{T})\boldsymbol{z}^{(i)}\|_{2}^{2}\left((\boldsymbol{v}^{(i)})^{T}f(A)\boldsymbol{v}^{(i)}-\sum\limits_{k=0}^{m}\tau_{k}^{(i)}f(\theta_{k}^{(i)})\right)\right|\\
		&\leq \dfrac{n}{N}\sum\limits_{i=1}^{N}\left|(\boldsymbol{v}^{(i)})^{T}f(A)\boldsymbol{v}^{(i)}-\sum\limits_{k=0}^{m}\tau_{k}^{(i)}f(\theta_{k}^{(i)})\right|.
\end{align*}
\end{linenomath}
Before delving into the analysis of the above error bound, we first consider a standard case where function $g$ is defined in $[-1,1]$, then generalize the result to the function $f$ defined in $[\lambda_{n},\lambda_{1}]$ with an affine linear transform.

\begin{lemma}{\cite[Theorem 3]{Alice2021On} and \cite[Theorem 3.3]{li2023analysis}} 
    \label{lemma_1}
    Let function $g$ be analytic in $[-1,1]$ and analytically continuable in the open Bernstein ellipse $E_{\rho}$ with foci $\pm 1$ and the sum of major and minor axis equals $\rho>1$, where it satisfies $|g(z)|\leq M_{\rho}$. Then the $(m+1)$-step Lanczos quadrature approximation error $E_{m+1}(g)$ satisfies
    \begin{equation}
        E_{m+1}(g)\triangleq\left|\int_{-1}^{1}g(t)d\mu(t)-\sum_{k=0}^{m}\tau_{k}g(\theta_{k})\right|\leq \frac{4M_{\rho}}{(\rho-1)\rho^{2m+1}},		
    \end{equation}
    where the $\mu(t)$ is the measure of integration, $\{\tau_{k}\}$ and $\{\theta_{k}\}$ are the weights and nodes of Gaussian quadrature rule and derived by $(m+1)$-steps Lanczos algorithm.
\end{lemma}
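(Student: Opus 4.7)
The plan is to combine two classical facts: the $(m{+}1)$-point Gaussian quadrature rule produced by the Lanczos procedure is exact for polynomials of degree at most $2m+1$, and analyticity of $g$ inside the Bernstein ellipse $E_\rho$ produces a geometric decay rate for its best uniform polynomial approximation on $[-1,1]$. Together these let one control $E_{m+1}(g)$ by the Bernstein bound.

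First I would record two normalization identities: $\int_{-1}^{1} d\mu(t) = 1$ and $\sum_{k=0}^{m}\tau_k = 1$. The former holds because $\mu$ is the spectral measure associated to a unit vector in the eigenbasis of $A$ (as constructed in \eqref{measure_function} with $\boldsymbol{v}^{(i)}$ of unit norm), and the latter because the weights $\tau_k=[\boldsymbol{e}_1^T\boldsymbol{y}_k]^2$ are squared components of $\boldsymbol{e}_1$ in an orthonormal eigenbasis of the Jacobi matrix $T_{m+1}$. Consequently, for any polynomial $p$ of degree at most $2m+1$ the quadrature identity $\int_{-1}^{1} p\, d\mu = \sum_{k=0}^{m}\tau_k\, p(\theta_k)$ holds, so inserting $p$ and applying the triangle inequality gives
$$E_{m+1}(g) = \left|\int_{-1}^{1}(g-p)(t)\,d\mu(t) - \sum_{k=0}^{m}\tau_k(g-p)(\theta_k)\right| \leq 2\,\|g-p\|_{\infty,[-1,1]}.$$

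The second step is to invoke Bernstein's approximation theorem: if $g$ extends analytically to the open Bernstein ellipse $E_\rho$ with $|g(z)|\leq M_\rho$ there, then the best uniform polynomial approximation of degree $n$ on $[-1,1]$ satisfies $\|g-p_n^\star\|_{\infty,[-1,1]} \leq \tfrac{2M_\rho}{\rho-1}\,\rho^{-n}$. Choosing $p=p_{2m+1}^\star$ and combining with the previous display yields the claimed bound $\tfrac{4M_\rho}{(\rho-1)\rho^{2m+1}}$.

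The only real subtlety is the Bernstein decay rate; the rest is algebraic bookkeeping. I would either cite the theorem directly or sketch a short derivation via the Chebyshev series of $g$: analyticity on $E_\rho$ forces the Chebyshev coefficients to decay like $\rho^{-n}$, and truncation at degree $2m+1$ produces exactly the stated approximation error. No effort is needed to transfer the result to a general interval $[\lambda_n,\lambda_1]$ at this stage, since the lemma is formulated on $[-1,1]$ and the affine rescaling is deferred to the subsequent application.
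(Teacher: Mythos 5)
Your argument is correct and is essentially the same proof that the paper itself defers to its references for (Ubaru--Chen--Saad, with the constant as corrected in \cite{Alice2021On,li2023analysis}): exactness of the $(m+1)$-node Gauss--Lanczos rule for polynomials of degree $\leq 2m+1$, unit total mass of both the measure $\mu$ and the weights $\{\tau_k\}$, and the Bernstein/Chebyshev decay $\|g-p_n^{\star}\|_{\infty,[-1,1]}\leq 2M_{\rho}\rho^{-n}/(\rho-1)$, which with $n=2m+1$ gives the stated factor $4M_{\rho}/((\rho-1)\rho^{2m+1})$. The only implicit point worth making explicit is that the Ritz values $\theta_k$ (eigenvalues of the Jacobi matrix $T_{m+1}$) lie in $[-1,1]$, so that $|(g-p)(\theta_k)|$ may indeed be bounded by the sup norm on $[-1,1]$.
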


For the proof of this lemma, readers are referred to \cite{Ubaru2017Fast}, and for a minor correction to the result, readers may consult \cite{Alice2021On,li2023analysis}.

\begin{theorem}
    \label{the_Lanczos_error}
    Let $A\in \mathbb{R}^{n\times n}$ be an SPD matrix with minimum eigenvalue $\lambda_{\min}\geq 1$,  $f(t)$ is analytic on $[\lambda_{\min},\lambda_{\max}]$. If the Lanczos iteration parameter $m$ satisfies 
    \begin{linenomath}
        $$m\geq \frac{1}{2}\log\frac{16nM_{\rho}}{C\varepsilon(\rho^{2}-\rho) }\bigg/\log(\rho),$$
    \end{linenomath}
    where $C=(n-1)f(\lambda_{\min})+f(\lambda_{\max})$, $M_{\rho}=|\log(\frac{\lambda_{\min}}{2})|+\pi$ and 
    $\rho=\frac{\lambda_{\max}+\sqrt{2\lambda_{\max}\lambda_{\min}-\lambda_{\min}^{2}}}{\lambda_{\max}\lambda_{\min}}$,
    then 
    \begin{linenomath}
        $$|H^{N}(\Delta)-L_{m+1}^{N}(\Delta)|\leq \frac{\varepsilon}{4}|{\mathrm{tr}}\left(f(A)\right)|.$$
    \end{linenomath}
\end{theorem}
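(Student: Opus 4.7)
The plan is to reduce the estimator-level error $|H^{N}(\Delta)-L_{m+1}^{N}(\Delta)|$ to a uniform bound on the single-vector Lanczos quadrature error, apply Lemma \ref{lemma_1} after an affine rescaling of the integration interval, and finally convert the absolute bound into the required relative form by lower-bounding $|\mathrm{tr}(f(A))|$. The subsection's opening chain of inequalities already supplies the reduction
\begin{linenomath}
$$\bigl|H^{N}(\Delta)-L_{m+1}^{N}(\Delta)\bigr|\leq \frac{n}{N}\sum_{i=1}^{N}\bigl|(\boldsymbol{v}^{(i)})^{T}f(A)\boldsymbol{v}^{(i)}-\sum_{k=0}^{m}\tau_{k}^{(i)}f(\theta_{k}^{(i)})\bigr|,$$
\end{linenomath}
so it suffices to control each summand uniformly in $i$, noting that the inner expression is precisely the $(m+1)$-point Gauss quadrature error for $\int_{\lambda_{\min}}^{\lambda_{\max}}f(t)\,d\mu^{(i)}(t)$.

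Next, I would transport the integral to the canonical interval through the affine map $t=\phi(s)\triangleq\frac{\lambda_{\max}-\lambda_{\min}}{2}s+\frac{\lambda_{\max}+\lambda_{\min}}{2}$ and define $g(s)\triangleq f(\phi(s))=\log\phi(s)$. Because the Gauss--Lanczos rule commutes with affine changes of variable, Lemma \ref{lemma_1} applies to $g$ on $[-1,1]$ as soon as I exhibit a Bernstein ellipse $E_{\rho}$ on which $g$ is analytic and bounded by some $M_{\rho}$. For $\log$ the only obstruction is the branch cut $(-\infty,0]$, so I would take the largest $\rho>1$ for which $\phi(E_{\rho})$ remains strictly in the right half-plane; demanding that the leftmost real point of $\phi(E_{\rho})$ equal $\lambda_{\min}/2$ produces a quadratic equation in $\rho$ whose positive root is the value recorded in the theorem. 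On this image ellipse one has $|\phi(z)|\geq \lambda_{\min}/2$ and $|\arg\phi(z)|\leq \pi$, so $|g(z)|=|\log\phi(z)|\leq |\log(\lambda_{\min}/2)|+\pi=M_{\rho}$, supplying the constant required by Lemma \ref{lemma_1}.

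With these ingredients in place, every summand is bounded by $4M_{\rho}/((\rho-1)\rho^{2m+1})$ and hence
\begin{linenomath}
$$\bigl|H^{N}(\Delta)-L_{m+1}^{N}(\Delta)\bigr|\leq \frac{4nM_{\rho}}{(\rho^{2}-\rho)\rho^{2m}}.$$
\end{linenomath}
To turn this into a relative bound I would use monotonicity of $\log$ together with $\lambda_{\min}\geq 1$ to write $|\mathrm{tr}(f(A))|=\sum_{i=1}^{n}\log\lambda_{i}\geq (n-1)f(\lambda_{\min})+f(\lambda_{\max})=C$, isolating the contribution of $\lambda_{\max}$ and bounding the remaining $n-1$ terms from below by $f(\lambda_{\min})$. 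Imposing the displayed bound to be at most $\varepsilon C/4$ and solving for $m$ by taking logarithms yields exactly the stated lower bound on the Lanczos iteration parameter.

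The main obstacle will be the complex-analytic step that fixes $\rho$ and $M_{\rho}$: the affine image of the Bernstein ellipse has to be described precisely enough to extract the quadratic whose root defines $\rho$, and the supremum of $|\log|$ over that image must be controlled by a clean separation of modulus and argument. Once $\rho$ and $M_{\rho}$ are secured, the remainder is straightforward algebra and the routine rearrangement of a logarithmic inequality; the factor $n$ in the bound is what forces the appearance of $4nM_{\rho}$ (rather than $4M_{\rho}$) inside the logarithm, and tracing this carefully is what links the prefactor $\|(I-QQ^{T})\boldsymbol{z}^{(i)}\|_{2}^{2}\leq n$ with the final iteration count.
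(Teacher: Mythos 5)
Your proposal follows the paper's proof route step for step: the same reduction via the prefactor $\|(I-QQ^{T})\boldsymbol{z}^{(i)}\|_{2}^{2}\leq n$, the same affine map to $[-1,1]$, the same choice of Bernstein ellipse through the condition that the leftmost point of the image equals $\lambda_{\min}/2$ (which is the paper's choice $\alpha=\lambda_{\max}/(\lambda_{\max}-\lambda_{\min})$ and yields $\rho=(\lambda_{\max}+\sqrt{2\lambda_{\max}\lambda_{\min}-\lambda_{\min}^{2}})/(\lambda_{\max}-\lambda_{\min})$; the denominator $\lambda_{\max}\lambda_{\min}$ printed in the theorem is a typo, as the paper's own proof shows), the same application of Lemma \ref{lemma_1} per summand, the same lower bound $C=(n-1)f(\lambda_{\min})+f(\lambda_{\max})\leq\mathrm{tr}(f(A))$, and the same algebra leading to the stated threshold on $m$. (A small internal inconsistency: "the largest $\rho$ keeping $\phi(E_{\rho})$ in the right half-plane" would push the leftmost point to $0$, not to $\lambda_{\min}/2$; the operative condition you actually use is the latter.)

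The one step that does not hold as you wrote it is the bound $M_{\rho}$. From "$|\phi(z)|\geq\lambda_{\min}/2$ and $|\arg\phi(z)|\leq\pi$" you conclude $|\log\phi(z)|\leq|\log(\lambda_{\min}/2)|+\pi$, but a lower bound on $|\phi|$ controls $|\log|\phi||$ only where $|\phi|\leq 1$. On the right-hand part of the image ellipse $|\phi(z)|$ is as large as $\lambda_{\max}+\lambda_{\min}/2$, so $\log|\phi(z)|$ is positive and can exceed $|\log(\lambda_{\min}/2)|$ as soon as $\lambda_{\max}$ is moderately large — and under the paper's normalization $\lambda_{\min}\geq 1$ the spectrum is not confined to $(0,1]$, so this regime is the generic one. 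The paper closes this step differently: it bounds $|\log w|\leq\sqrt{(\log|w|)^{2}+\pi^{2}}$ and then appeals to the assertion that the maximum over the ellipse is attained at the leftmost real vertex $\lambda_{\min}/2$, citing \cite{brown2009complex} and \cite[Corollary 5]{Alice2021On}. That maximum-location argument (or some explicit upper control of $\log|\phi|$ over the entire ellipse) is exactly what your derivation is missing; as written, your inequality chain for $M_{\rho}$ is a non sequitur, even though it is the step you yourself flagged as the main obstacle. Everything downstream of $M_{\rho}$ in your proposal is correct and matches the paper.
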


\begin{proof}
Define a new function
\begin{linenomath}
    $$g(t)\triangleq f\left[\left(\frac{\lambda_{\max}-\lambda_{\min}}{2}\right)t+\left(\frac{\lambda_{\max}+\lambda_{\min}}{2}\right)\right],$$
\end{linenomath}
which is analytic on $[-1,1]$ and has singularity at $t=-(\lambda_{\max}+\lambda_{\min})/(\lambda_{\max}-\lambda_{\min})$. To ensure that $g(t)$ can be analytically continued to an appropriate Bernstein ellipses $E_{\rho}$ with foci $\pm 1$, one can choose the length of semi-major axis as $\alpha=\lambda_{\max}/(\lambda_{\max}-\lambda_{\min})$, then the length of semi-minor axis is $\beta=\sqrt{\alpha^{2}-1}$, so the elliptical radius $\rho$ can be derived by
\begin{linenomath}
    $$\rho = \alpha+\beta = \frac{\lambda_{\max}+\sqrt{2\lambda_{\max}\lambda_{\min}-\lambda_{\min}^{2}}}{\lambda_{\max}-\lambda_{\min}}>1.$$
\end{linenomath}
As $g(t)$ is analytic on the open ellipse $E_{\rho}$ defined above,
\begin{linenomath}
\begin{align*}
    \max\limits_{t\in E_{\rho}} |g(t)| & =\max\limits_{t\in E_{\rho}}\left|\log\left(\frac{\lambda_{\max}-\lambda_{\min}}{2}t+\frac{\lambda_{\max}+\lambda_{\min}}{2}\right)\right|\\
    &\leq \max\limits_{t\in E_{\rho}} \sqrt{\left(\log\left|\frac{\lambda_{\max}-\lambda_{\min}}{2}t+\frac{\lambda_{\max}+\lambda_{\min}}{2}\right|\right)^{2}+\pi^{2}}\\
    &=\sqrt{\left(\log\left|\frac{\lambda_{\max}-\lambda_{\min}}{2}(-\frac{\lambda_{\max}}{\lambda_{\max}-\lambda_{\min}})+\frac{\lambda_{\max}+\lambda_{\min}}{2}\right|\right)^{2}+\pi^{2}}\\
    &\leq \left|\log(\frac{\lambda_{\min}}{2})\right|+\pi \ \triangleq \ M_{\rho},
\end{align*}
\end{linenomath}
where the first inequality results from $|\log(z)|=|\log |z|+i\arg(z)|\leq \sqrt{(\log|z|)^{2}+\pi^{2}}$, and the second equality holds since the maximum absolute value of the logarithm on the ellipse is attained at the endpoint $t=-\lambda_{\max}/(\lambda_{\max}-\lambda_{\min})$ on the real axis \cite[p. 175-178]{brown2009complex} and \cite[Corollary 5]{Alice2021On}. 

Next, based on Lemma \ref{lemma_1} and Corollary 3 in \cite{Alice2021On}, we obtain
\begin{linenomath}
    \begin{align*}
    |H^{N}(\Delta)-L_{m+1}^{N}(\Delta)|&\leq \dfrac{n}{N}\sum\limits_{i=1}^{N}\left|(\boldsymbol{v}^{(i)})^{T}f(A)\boldsymbol{v}^{(i)}-\sum\limits_{k=0}^{m}\tau_{k}^{(i)}f(\theta_{k}^{(i)})\right|\\
    &= \dfrac{n}{N} \sum\limits_{i=1}^{N}\left|\int_{\lambda_{\min}}^{\lambda_{\max}}f(t)d\mu^{(i)}(t)-\sum\limits_{k=0}^{m}\tau_{k}^{(i)}f(\theta_{k}^{(i)})\right|\\
    & \leq \frac{4nM_{\rho}}{(\rho -1)\rho^{2m+1}},
\end{align*}
\end{linenomath}
where the last inequality utilizes $E_{m+1}(f)=E_{m+1}(g)$ \cite[Proposition 3.4]{li2023analysis}.

As $f(t)$ increases monotonically on the interval $[\lambda_{\min},\lambda_{\max}]$, one may obtain
\begin{linenomath}
   $$C\triangleq [(n-1)f(\lambda_{\min})+f(\lambda_{\max})]\leq \sum_{i=1}^{n}f(\lambda_{i})={\mathrm{tr}}(f(A)).$$
\end{linenomath}
Let $C_{\rho}\triangleq \frac{4M_{\rho}}{C(\rho^{2}-\rho)}$, when the Lanczos iteration parameter $m$ satisfies
\begin{equation}
    \label{the_bound_of_m}
    m\geq \frac{1}{2}\log\left(\frac{4nC_{\rho}}{\varepsilon }\right)/\log(\rho),
\end{equation}
the Lanczos quadrature approximation error has the following upper bound
\begin{equation}
    |H^{N}(\Delta)-L_{m+1
    }^{N}(\Delta)|\leq \frac{\varepsilon}{4}C \leq \frac{\varepsilon}{4}{\mathrm{tr}}(f(A)).
\end{equation}
\qed
\end{proof}

\subsection{Hutchinson estimation error with bound of F-norm}
\label{subsec:hutch1}
In this subsection, we analyze the error bound of the Hutchinson trace estimator for any symmetric matrix $A$. \cite{hutch++} showed that when the number of query vectors for the Hutchinson trace estimator is $N=O(\log(1/\delta)/\epsilon^{2})$, then $\left|H^{N}(A)-\mathrm{tr}(A)\right|\leq \epsilon \|A\|_{F}$ with probability at least $1-\delta$. However, this bound for $N$ lacks explicit constants, providing little guidance for practitioners. To address this issue, we derive an explicit bound for the number of query vectors in the following discussion.

\begin{lemma}
\label{single_sample_inequality}
Let $A\in \mathbb{R}^{n\times n}$ be a symmetric matrix, $\boldsymbol{z}=(z_{1},\ldots,z_{n})^{T}$ be a Rademacher random vector. Then, for all $\epsilon >0$
\begin{equation}
    \mathbb{P}\left\{ |\boldsymbol{z}^{T}A\boldsymbol{z}-{\mathrm{tr}}(A)|\geq \epsilon \|A\|_{F}\right\}\leq \frac{(\sqrt{2}+2\mathrm{sr}(A)^{-\frac{1}{2}})^{2}}{\epsilon^{2}},
\end{equation}
where $\mathrm{sr}(A)\triangleq \|A\|_{F}^{2}/\|A\|_{2}^{2}$ denotes the stable rank of $A$.
\end{lemma}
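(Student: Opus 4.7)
\medskip

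\noindent\textbf{Proof proposal.} The plan is to apply Markov's inequality to the second moment of $X \triangleq \boldsymbol{z}^{T}A\boldsymbol{z} - \mathrm{tr}(A)$, and then invoke the Hanson--Wright inequality (Lemma \ref{hanson_wright_inequality}) with $p=2$ to bound $\|X\|_2$ in terms of $\|A\|_F$ and $\|A\|_2$. The final step is to rewrite the ratio $\|A\|_2/\|A\|_F$ in terms of the stable rank $\mathrm{sr}(A)$.

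First, I would observe that because $\boldsymbol{z}$ is Rademacher one has $\mathbb{E}[z_i z_j] = \delta_{ij}$, so
$$\mathbb{E}[\boldsymbol{z}^{T}A\boldsymbol{z}] = \sum_{i,j} A_{ij}\,\mathbb{E}[z_i z_j] = \sum_{i} A_{ii} = \mathrm{tr}(A),$$
which means $X$ is exactly the centered quantity appearing in the Hanson--Wright bound. Next, by Markov's inequality applied to $|X|^2$,
$$\mathbb{P}\bigl\{|X| \geq \epsilon \|A\|_F\bigr\} \;\leq\; \frac{\mathbb{E}|X|^2}{\epsilon^2 \|A\|_F^2} \;=\; \frac{\|X\|_2^2}{\epsilon^2 \|A\|_F^2}.$$

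Then I would apply Lemma \ref{hanson_wright_inequality} with $p = 2$ (using the version with unit hidden constant that is available for Rademacher vectors) to obtain
$$\|X\|_2 \;\leq\; \sqrt{2}\,\|A\|_F + 2\,\|A\|_2.$$
Dividing by $\|A\|_F$ and substituting $\|A\|_2/\|A\|_F = \mathrm{sr}(A)^{-1/2}$ produces
$$\frac{\|X\|_2}{\|A\|_F} \;\leq\; \sqrt{2} + 2\,\mathrm{sr}(A)^{-1/2},$$
and squaring then combining with the Markov estimate yields the desired probability bound.

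The main obstacle is purely a bookkeeping matter: the Hanson--Wright inequality as stated in Lemma \ref{hanson_wright_inequality} involves $\lesssim$, so one must verify that the absolute constant can be absorbed to give the cleaner form $\sqrt{p}\|A\|_F + p\|A\|_2$ with constant one on the right-hand side (valid for Rademacher entries, whose subgaussian parameter is at most one). Aside from this, the argument is a direct Chebyshev-plus-Hanson--Wright computation; no additional structural lemmas about $A$ are needed.
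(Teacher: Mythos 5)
Your proposal matches the paper's own proof essentially step for step: apply Hanson--Wright (Lemma \ref{hanson_wright_inequality}) with $p=2$ to bound $\mathbb{E}|\boldsymbol{z}^{T}A\boldsymbol{z}-\mathrm{tr}(A)|^{2}$ by $(\sqrt{2}\|A\|_{F}+2\|A\|_{2})^{2}$, use Chebyshev/Markov on the second moment, and rewrite $\|A\|_{2}/\|A\|_{F}$ via the stable rank. The constant-one form of Hanson--Wright that you flag is exactly what the paper also assumes (it drops the $\lesssim$ constant when invoking the lemma), so your argument is correct and not genuinely different.
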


\begin{proof}
From lemma \ref{hanson_wright_inequality}, for all $p\geq 1$,
\begin{equation}
    \left\|\boldsymbol{z}^{T} A \boldsymbol{z}-\mathbb{E} \boldsymbol{z}^{T} A \boldsymbol{z}\right\|_{p} \leq \sqrt{p}\|A\|_{F}+p\|A\|_{2},
\end{equation}
where $\|\cdot\|_{p}$ denotes $(\mathbb{E}|\cdot|^{p})^{1/p}$. In particular, for $p=2$, we have
\begin{equation}
    \mathbb{E}|\boldsymbol{z}^{T} A \boldsymbol{z}-\mathbb{E} \boldsymbol{z}^{T} A \boldsymbol{z}|^{2} \leq (\sqrt{2}\|A\|_{F}+2\|A\|_{2})^{2},
\end{equation}
then based on the Chebyshev inequality \cite[Corollary 1.2.5]{Roman2018High},
\begin{linenomath}
\begin{equation}
    \label{one sample tail bound}
    \mathbb{P}\left\{ |\boldsymbol{z}^{T}A\boldsymbol{z}-{\mathrm{tr}}(A)|\geq \epsilon \|A\|_{F}\right\}\leq \frac{(\sqrt{2}\|A\|_{F}+2\|A\|_{2})^{2}}{\epsilon^{2}\|A\|_{F}^{2}}.
\end{equation}
\end{linenomath}
The proof ends by replacing $\|A\|_{F}^{2}/\|A\|_{2}^{2}$ with $\mathrm{sr}(A)$.\\
\begin{linenomath}
    \qed
\end{linenomath}
\end{proof}

\begin{theorem}
\label{the_hucthinson_estimation_error}
Let $A\in \mathbb{R}^{n\times n}$ be a symmetric matrix. Let $H^{N}(A)$ denote an $N$-query Hutchinson trace estimator implemented with Rademacher random vectors. For any given $\epsilon$ and $\delta$, if query number $N$ satisfies 
\begin{equation}
    \label{the_bound_of_N}
    N\geq \dfrac{\left(1+\sqrt{1+4\epsilon\sqrt{\delta}}\right)^{2}}{2\epsilon^{2}\delta},
\end{equation}
we have
\begin{equation}
    \mathbb{P}\left\{|H^{N}(A)-{\mathrm{tr}}(A)|\leq \epsilon \|A\|_{F}\right\}\geq 1-\delta.
\end{equation}
\end{theorem}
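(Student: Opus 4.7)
The plan is to reduce the $N$-query Hutchinson estimator to a single Rademacher quadratic form and then invoke Lemma \ref{single_sample_inequality} directly. I would introduce the stacked vector $\boldsymbol{w}=[(\boldsymbol{z}^{(1)})^{T},\ldots,(\boldsymbol{z}^{(N)})^{T}]^{T}\in\mathbb{R}^{nN}$, which is itself a Rademacher random vector, together with the block-diagonal matrix $\tilde A = \frac{1}{N}(I_{N}\otimes A)\in\mathbb{R}^{nN\times nN}$, so that $H^{N}(A)=\boldsymbol{w}^{T}\tilde A\boldsymbol{w}$. The key structural facts are $\mathrm{tr}(\tilde A)=\mathrm{tr}(A)$, $\|\tilde A\|_{F}=\|A\|_{F}/\sqrt{N}$, $\|\tilde A\|_{2}=\|A\|_{2}/N$, and consequently $\mathrm{sr}(\tilde A)=N\,\mathrm{sr}(A)$.

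Having recast $H^{N}(A)$ as a single quadratic form in $\boldsymbol{w}$, I would apply Lemma \ref{single_sample_inequality} to $\tilde A$ with tolerance parameter $\epsilon\sqrt{N}$, chosen so that $\epsilon\sqrt{N}\,\|\tilde A\|_{F}=\epsilon\,\|A\|_{F}$. The resulting tail probability is at most $(\sqrt{2}+2(N\,\mathrm{sr}(A))^{-1/2})^{2}/(N\epsilon^{2})$; invoking $\mathrm{sr}(A)\ge 1$ loosens the numerator to $(\sqrt{2}+2/\sqrt{N})^{2}$. Requiring this quantity to be at most $\delta$, clearing denominators, and substituting $u=\sqrt{N}$ reduces the problem to the quadratic inequality $\epsilon\sqrt{\delta}\,u^{2}-\sqrt{2}\,u-2\ge 0$. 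Solving this quadratic via the standard formula and squaring the positive root reproduces precisely the stated lower bound on $N$.

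I expect the main obstacle to be recognizing the value of the stacking reduction rather than any technical difficulty in executing it. A more naive approach, such as applying the Hanson--Wright second-moment bound to a single query and then using independence to divide $\mathrm{Var}(H^{N}(A))$ by $N$, only scales down the Frobenius-norm contribution and leaves the $\|A\|_{2}$ term undiminished; this yields a constant prefactor of order $(\sqrt{2}+2)^{2}/(\epsilon^{2}\delta)$ that is strictly looser than what the theorem claims. The stacking construction is what turns $\|A\|_{2}$ into $\|A\|_{2}/N$ inside $\|\tilde A\|_{2}$, producing the extra $\sqrt{\delta}$-dependence under the inner square root in the stated bound. Once the reduction is in place, the remaining argument is a direct application of Lemma \ref{single_sample_inequality} followed by a one-line quadratic solve.
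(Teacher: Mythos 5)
Your proposal is correct and follows essentially the same route as the paper: the paper also stacks the $N$ queries into one Rademacher vector, forms the block-diagonal matrix $\mathcal{A}=\mathrm{diag}(N^{-1}A,\ldots,N^{-1}A)$ (your $\tfrac{1}{N}(I_N\otimes A)$), applies Lemma \ref{single_sample_inequality} using $\|\mathcal{A}\|_{F}=N^{-1/2}\|A\|_{F}$, $\|\mathcal{A}\|_{2}=N^{-1}\|A\|_{2}$, invokes $\mathrm{sr}(A)\ge 1$, and solves the same quadratic in $\sqrt{N}$ to obtain \eqref{the_bound_of_N}. The only cosmetic difference is that you parameterize the lemma with tolerance $\epsilon\sqrt{N}$ while the paper applies it at tolerance $\epsilon$ and rescales afterward.
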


\begin{proof}
Define the following block diagonal matrix
\begin{linenomath}
    $$\mathcal{A}={\mathrm{diag}}(N^{-1}A,\ldots,N^{-1}A)\in \mathbb{R}^{N n\times N n},$$
\end{linenomath}
that is, matrix $\mathcal{A}$ consists of $N$ diagonal blocks containing rescaled copies of $A$.

The $N$-query Hutchinson trace estimator of $A$ equals $\tilde{z}^{T}\mathcal{A}\tilde{z}$ for a Rademacher vector $\tilde{z}$ of length $N\times n$, that is $\tilde{z}^{T}\mathcal{A}\tilde{z}=H^{N}(A)$. Note that ${\mathrm{tr}}(\mathcal{A})={\mathrm{tr}}(A)$, $\|\mathcal{A}\|_{F}=N^{-1/2}\|A\|_{F}$ and $\|\mathcal{A}\|_{2}=N^{-1}\|A\|_{2}$.
 
From lemma \ref{single_sample_inequality}, we have
\begin{linenomath}
    \begin{align*}
		\mathbb{P}\left\{|H^{N}(A)-{\mathrm{tr}}(A)|\geq \epsilon N^{-1/2}\|A\|_{F}\right\} &=\mathbb{P}\left\{|\tilde{z}^{T}\mathcal{A}\tilde{z}-\mathbb{E}\tilde{z}^{T}\mathcal{A}\tilde{z}|\geq \epsilon\|\mathcal{A}\|_{F}\right\}  \\
		&\leq \dfrac{(\sqrt{2}\|\mathcal{A}\|_{F}+2\|\mathcal{A}\|_{2})^{2}}{\epsilon^{2}\|\mathcal{A}\|_{F}^{2}}\\
		& = \dfrac{(\sqrt{2}+2(\mathrm{sr}(A) N)^{-\frac{1}{2}})^{2}}{\epsilon^{2}},
    \end{align*}
\end{linenomath}
that is 
\begin{equation}
    \mathbb{P}\left\{|H^{N}(A)-{\mathrm{tr}}(A)|\geq \epsilon\|A\|_{F}\right\} \leq \left(\frac{\sqrt{2}N^{\frac{1}{2}}+2\mathrm{sr}(A)^{-\frac{1}{2}}}{\epsilon N}\right)^{2}.
\end{equation}
Let the right-hand side of the above inequality be less than $\delta$, and as $\mathrm{sr}(A)>1$, for a given tolerance factor $\epsilon>0$, if the query number $N$ satisfies
\begin{linenomath}
    \begin{equation*}
    N \geq \frac{\left(1+\sqrt{1+4\epsilon\sqrt{\delta}}\right)^{2}}{2\epsilon^{2}\delta},
\end{equation*}
\end{linenomath}
we have
\begin{equation}
   \mathbb{P}\left\{|H^{N}(A)-{\mathrm{tr}}(A)|\leq \epsilon\|A\|_{F}\right\}\geq 1-\delta. 
\end{equation}
\qed
\end{proof}

Recall that $\Delta$ is an SPD matrix defined in (\ref{eq3}). Based on Theorem \ref{the_hucthinson_estimation_error}, if $N$ satisfies the condition formulated in (\ref{the_bound_of_N}), we have
\begin{equation}
    \label{the_error_bound_of_Delta}
    \mathbb{P}\left\{|H^{N}(\Delta)-\mathrm{tr}(\Delta)|\leq \epsilon \|\Delta\|_{F}\right\}\geq 1-\delta.
\end{equation}

\subsection{Hutchinson estimation error with bound of $tr(f(A))$}
\label{subsec:hutch2}
In this subsection, we will transform the error bound $\epsilon\|\Delta\|_{F}$ that appears in (\ref{the_error_bound_of_Delta}) into $\epsilon\cdot \mathrm{tr}(f(A))/2$ for the purpose of total error analysis.

Recall that $H^{N}(\Delta)$ is the Hutchinson trace estimator, as defined in (\ref{eq4}). If the number of query vectors $N$ satisfies the bound given in (\ref{the_bound_of_N}), then the following theorem holds.

\begin{theorem}
    \label{the_error_bound_of_part2}
    Given $\epsilon,\delta \in (0,1)$, $A\in \mathbb{R}^{n\times n}$ is an SPD matrix with minimum eigenvalue $\lambda_{\min}\geq 1$. Let $S\in \mathbb{R}^{n\times q}$ be a random matrix with independent normal random variables entries $S_{ij}\sim \mathcal{N}(0,1) /\sqrt{q}$, and $Q\in \mathbb{R}^{n\times k}$ consists of $k$-principal orthonormal bases of the column space spanned by $f(A)S$. If $k$ and $q$ satisfy
    \begin{linenomath}
        $$k\geq \dfrac{16(1+\epsilon)}{1-\epsilon}, q\geq \dfrac{288k}{\epsilon^{2}\delta},$$
    \end{linenomath}
    then
    \begin{equation}	
        \mathbb{P}\left\{|H^{N}(\Delta)-{\mathrm{tr}}(\Delta)|\leq 
		\frac{\epsilon}{4}{\mathrm{tr}}(f(A))\right\}\geq 1-\delta.
    \end{equation}
\end{theorem}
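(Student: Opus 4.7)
The plan is to combine the Rademacher Hutchinson tail bound of Theorem \ref{the_hucthinson_estimation_error} (which is phrased in terms of $\|\Delta\|_F$) with a high-probability bound on $\|\Delta\|_F$ driven by the Gaussian sketch $S$, and then to finish by a union bound. Concretely, Theorem \ref{the_hucthinson_estimation_error} applied with tolerance $\epsilon$ and confidence $\delta/2$ already yields $|H^{N}(\Delta)-\mathrm{tr}(\Delta)|\leq \epsilon\|\Delta\|_F$ with probability at least $1-\delta/2$ over the Rademacher vectors, for any realisation of $S$. So the remaining task is to prove that with probability at least $1-\delta/2$ over $S$ we have $\|\Delta\|_F\leq \mathrm{tr}(f(A))/4$; multiplying the two gives the advertised bound.

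First I would reduce the target quantity to a one-sided residual: since $I-QQ^{T}$ is an orthogonal projection,
\begin{equation*}
\|\Delta\|_F \;=\; \|(I-QQ^{T})\,f(A)\,(I-QQ^{T})\|_F \;\leq\; \|(I-QQ^{T})\,f(A)\|_F.
\end{equation*}
Then I would invoke the projection-cost-preserving sketch (PCPS) machinery. Because $Q$ consists of the top-$k$ left singular vectors of $f(A)S$, the projection $QQ^{T}$ minimises $\|f(A)S-Pf(A)S\|_F$ over all rank-$k$ orthogonal projections $P$; in particular $\|f(A)S-QQ^{T}f(A)S\|_F \leq \|f(A)S-U_{k}U_{k}^{T}f(A)S\|_F$. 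Applying Definition \ref{projection_cost_preserving_sketch} with $c=0$ once as a lower bound (taking $P=QQ^{T}$) and once as an upper bound (taking $P=U_{k}U_{k}^{T}$) gives
\begin{equation*}
\|(I-QQ^{T})f(A)\|_F^{2} \;\leq\; \frac{1+\epsilon}{1-\epsilon}\,\|f(A)-f(A)_{k}\|_F^{2}.
\end{equation*}
Since $\lambda_{\min}(A)\geq 1$ makes $f(A)=\log(A)$ positive semidefinite, Lemma \ref{k_rank_app} bounds $\|f(A)-f(A)_{k}\|_F$ by $\mathrm{tr}(f(A))/\sqrt{k}$. The hypothesis $k\geq 16(1+\epsilon)/(1-\epsilon)$ then translates the chain into $\|\Delta\|_F \leq \mathrm{tr}(f(A))/4$, exactly what is needed.

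The remaining and main technical obstacle is to show that a Gaussian sketch $S$ with entries $\mathcal{N}(0,1)/\sqrt{q}$ and $q\geq 288k/(\epsilon^{2}\delta)$ columns is indeed an $(\epsilon,0,k)$-PCPS of $f(A)$ with probability at least $1-\delta/2$. For this I would rely on a standard two-layer argument: first verify the JL-moments property of the Gaussian sketch through a direct second-moment computation on $\|x^{T}S\|_{2}^{2}$, which is a scaled chi-square of variance $2/q$; then apply the standard reduction from JL moments to PCPS to produce a guarantee that is uniform over all rank-$k$ projections. The explicit constant $288$ arises from tracking the multiplicative factors in this reduction, from Markov's inequality used to pass from moment bounds to tail probabilities, and from allocating $\delta/2$ of the total failure probability here; the other $\delta/2$ is consumed by the Hutchinson step, and a single union bound closes the argument. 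The rest of the theorem is then a clean assembly of the three inequalities $\|\Delta\|_F\leq \|(I-QQ^{T})f(A)\|_F$, the PCPS ratio bound, and Lemma \ref{k_rank_app}.
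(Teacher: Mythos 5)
Your outline follows essentially the same route as the paper: bound $\|\Delta\|_F\leq\|(I-QQ^T)f(A)\|_F$, use the optimality of $QQ^T$ for the sketched matrix together with the two sides of Definition \ref{projection_cost_preserving_sketch} (with $P=QQ^T$ and $P=U_kU_k^T$) to get the ratio $\tfrac{1+\epsilon}{1-\epsilon}$, invoke Lemma \ref{k_rank_app} and $k\geq 16(1+\epsilon)/(1-\epsilon)$ to reach $\|\Delta\|_F\leq\mathrm{tr}(f(A))/4$, and finish with the Hutchinson tail bound of Theorem \ref{the_hucthinson_estimation_error}. The PCPS ingredient you defer to ``standard two-layer machinery'' (chi-square second moment of $\|\boldsymbol{x}^TS\|_2^2$ plus the JL-moments-to-PCPS reduction) is exactly what the paper proves in Lemmas \ref{lemma_new1024} and \ref{the_bound_of_q}, so structurally the proposal is sound.

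The one place where your accounting does not close is the probability splitting versus the stated constants. You allocate $\delta/2$ to the sketch event and $\delta/2$ to the Hutchinson event and then union bound, which is cleaner than the paper's treatment (the paper lets each event fail with probability $\delta$ and still asserts overall confidence $1-\delta$, which strictly gives only $1-2\delta$). But with your split, the hypotheses as stated no longer suffice under the very lemmas you invoke: the second-moment computation gives $\mathbb{E}\bigl|\|\boldsymbol{x}^TS\|_2^2-1\bigr|^2=2/q$, and to obtain the $(\tfrac{\epsilon}{6\sqrt{k}},\tfrac{\delta'}{4},2)$-JL moment property needed for an $(\epsilon,0,k)$-PCPS with failure probability $\delta'=\delta/2$ you need $q\geq 576k/(\epsilon^2\delta)$, not $288k/(\epsilon^2\delta)$; similarly Theorem \ref{the_hucthinson_estimation_error} at confidence $\delta/2$ requires a larger $N$ than the bound in (\ref{the_bound_of_N}). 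Your claim that the constant $288$ ``arises from allocating $\delta/2$'' is therefore not consistent with your own reduction: $288=2\cdot 36\cdot 4$ comes from taking the full budget $\delta$ in Lemma \ref{the_bound_of_q} (the factor $4$ being the $1-4\delta$ loss in Lemma \ref{lemma_new1024}), which is how the paper gets it. So either keep the stated constants and accept the paper's looser $1-\delta$-versus-$1-2\delta$ bookkeeping, or keep your union bound and double the lower bounds on $q$ and $N$; as written, the proposal proves the theorem only with one of these adjustments made explicit.
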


To prove this theorem, we first establish some properties of JL moments and PCPS, which were defined in Section 2.

\begin{lemma}
    \label{lemma_new1024}
    If matrix $S\in \mathbb{R}^{n\times q}$ satisfies the $(\frac{\epsilon}{6\sqrt{k}},\delta,\ell)$-JL moment property for any $\ell\geq2$, then with probability $\geq 1-4\delta$, $\tilde{A}=AS$ is an $(\epsilon,0,k)$-PCPS of $A$.
\end{lemma}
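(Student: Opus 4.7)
The plan is to combine two standard consequences of the JL moment property---approximate matrix multiplication (AMM) and subspace embedding for a $k$-dimensional subspace---with a truncated-SVD decomposition of $A$. Write $A = A_k + A_\perp$, with $A_k$ the best rank-$k$ approximation, so that $A_k^T A_\perp = 0$. The quantity to control is
\begin{linenomath}
\begin{equation*}
    \|\tilde A - P\tilde A\|_F^2 - \|A - PA\|_F^2 \;=\; \mathrm{tr}\bigl((SS^T - I)\,A^T(I-P)A\bigr).
\end{equation*}
\end{linenomath}
Expanding $A = A_k + A_\perp$ in the quadratic form and using $A_k^T A_\perp = 0$ collapses the analysis to three block contributions: a head term $\mathrm{tr}((SS^T-I)A_k^T(I-P)A_k)$, a tail term $\mathrm{tr}((SS^T-I)A_\perp^T(I-P)A_\perp)$, and a cross term $-2\,\mathrm{tr}((SS^T-I)A_k^T P A_\perp)$. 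The goal will be to bound the sum by $\epsilon\|A - PA\|_F^2$ uniformly in the rank-$k$ projection $P$, on the intersection of four high-probability events.

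The required concentration estimates are extracted from the $(\epsilon/(6\sqrt k),\delta,\ell)$-JL moment as follows. First, a standard covering-net argument on the unit sphere of a rank-$k$ subspace (whose net has size $\exp(O(k))$) upgrades the per-vector JL moment into a uniform $\epsilon/6$-subspace embedding on the fixed row space of $A_k$, with probability $\geq 1-\delta$ (event $E_1$); the $\sqrt k$ in the JL parameter is exactly what is needed to absorb the net. Second, AMM---for any prescribed $X, Y$, $\|X^T(SS^T-I)Y\|_F \leq (\epsilon/(2\sqrt k))\|X\|_F\|Y\|_F$ with probability $\geq 1-\delta$, obtained from JL moments via a Minkowski/second-moment calculation---will be applied in three different configurations, yielding events $E_2, E_3, E_4$. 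A union bound over $E_1,\ldots,E_4$ will then give the claimed $1-4\delta$ success probability.

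The bounds are assembled in order. The head term is controlled by $E_1$: eigendecomposing $A_k^T(I-P)A_k = V\Lambda V^T$ with $V$ supported in the fixed row space of $A_k$, its value equals $\sum_i \lambda_i(\|S^T v_i\|_2^2 - 1)$, which is at most $(\epsilon/6)\|(I-P)A_k\|_F^2$ uniformly in $P$. The tail term is bounded via AMM with $X = Y = A_\perp$ combined with $|\mathrm{tr}(PM)| \leq \|P\|_F\|M\|_F = \sqrt k\,\|M\|_F$ (using $\mathrm{rank}(P)\leq k$ so $\|P\|_F\leq\sqrt k$), which absorbs the $1/\sqrt k$ in AMM and yields a bound of order $\epsilon\|A_\perp\|_F^2 \leq \epsilon\|A-PA\|_F^2$ by Eckart-Young. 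The cross term is treated analogously by applying AMM to the fixed matrix $M := A_\perp(SS^T-I)A_k^T$ and invoking the same trace-vs-Frobenius inequality $|\mathrm{tr}(PM)|\leq \sqrt k\,\|M\|_F$, which reduces the problem to $(\epsilon/2)\|A_k\|_F\|A_\perp\|_F$.

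The main obstacle is precisely this cross-term bound: the factor $\|A_k\|_F$ is not uniformly controlled by $\|A-PA\|_F$ (indeed, $\|A_k\|_F$ can be on the order of $\sqrt k\,\|A\|_{op}$ even when $\|A-PA\|_F$ is small). The remedy is an algebraic reshuffling via the Eckart-Young identity $\|(I-P)A\|_F^2 = \|(I-P)A_k\|_F^2 + \|(I-P)A_\perp\|_F^2 - 2\,\mathrm{tr}(A_k^T P A_\perp)$, derived from $A_k^T A_\perp = 0$, which relates the problematic $\|A_k\|_F$ factor to $\|(I-P)A_k\|_F^2$ already controlled by the head-term analysis. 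The fourth AMM event enters precisely when closing this reshuffling, which is what elevates the probability budget from $3\delta$ to $4\delta$. Without the $\sqrt k$ scaling in the JL hypothesis, the trace-vs-Frobenius steps used above would incur uncancellable $\sqrt k$ blow-ups, so the scaling $\epsilon/(6\sqrt k)$ in the hypothesis is essentially tight for this approach.
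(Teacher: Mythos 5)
Your overall architecture---split $A=A_k+A_\perp$, control a head term through a subspace embedding of the row space of $A_k$, a tail term through Frobenius-preservation/AMM, a cross term through AMM, and union-bound four events to reach $1-4\delta$---is exactly the structure of \cite[Lemma 6]{Cameron2020Proj}, which the paper's proof simply invokes (the paper only fills in the subspace-embedding step and defers the rest). However, two of your steps do not go through. First, event $E_1$: a covering net of the unit sphere of a $k$-dimensional subspace has size $\exp(\Theta(k))$ (e.g.\ $9^k$), while Markov applied to the $(\frac{\epsilon}{6\sqrt{k}},\delta,\ell)$-JL moment at a threshold of order $\epsilon$ gives a per-point failure probability of order $(c/\sqrt{k})^{\ell}\delta$. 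The $\sqrt{k}$ slack in the distortion is therefore worth only a factor $k^{-\ell/2}$ in probability; in the case the lemma is actually applied with downstream (Lemma \ref{the_bound_of_q} verifies only $\ell=2$) this is $O(\delta/k)$, and the union bound over the net costs $9^{k}$, so your claim that ``the $\sqrt{k}$ is exactly what is needed to absorb the net'' is false unless $\ell=\Omega(k/\log k)$. This is precisely why the reference's Lemma 6, as used in the paper's own Appendix Lemma \ref{loose-bound}, additionally demands an $(\frac{\epsilon}{3},\frac{\delta}{9^{k}},\ell)$-type JL moment condition for the subspace-embedding step; note also that the paper's main-text shortcut (a per-vector Markov bound followed by the assertion of a uniform embedding) glosses the same per-vector-to-uniform passage, so you cannot borrow it to repair $E_1$.

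Second, the cross term. Bounding $2|\mathrm{tr}(PM)|\leq 2\sqrt{k}\,\|M\|_F$ with $M=A_\perp(SS^T-I)A_k^T$ and then applying AMM leaves you with $\epsilon\|A_k\|_F\|A_\perp\|_F$, which, as you yourself observe, is not dominated by $\epsilon\|A-PA\|_F^2$. Your proposed rescue---the identity $\|(I-P)A\|_F^2=\|(I-P)A_k\|_F^2+\|(I-P)A_\perp\|_F^2-2\,\mathrm{tr}(A_k^TPA_\perp)$ plus a fourth, unspecified AMM event---is asserted rather than carried out, and no post-processing can work from that intermediate bound: if $\|A_k\|_F$ is huge, $\|A_\perp\|_F$ small and $P$ projects onto the column span of $A_k$, then $\|A-PA\|_F\leq\|A_\perp\|_F$ is tiny while $\epsilon\|A_k\|_F\|A_\perp\|_F$ is enormous, so the information must be retained \emph{before} Cauchy--Schwarz. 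The standard fix (and what the cited proof does) is to keep $(I-P)$ attached to $A_k$: with $A_k=U_k\Sigma_kV_k^T$ one bounds $|\mathrm{tr}((SS^T-I)A_k^T(I-P)A_\perp)|\leq\|(I-P)A_k\|_F\,\|A_\perp(SS^T-I)V_k\|_F$, applies AMM to the pair $(A_\perp,V_k)$ so that $\|V_k\|_F=\sqrt{k}$ cancels the $1/\sqrt{k}$, and closes with AM--GM, absorbing $\|(I-P)A_k\|_F\|A_\perp\|_F$ into $\epsilon$ times the head and tail contributions (with the bookkeeping done in the reference). Until $E_1$ and this cross-term step are replaced along these lines, the proposal is not a proof.
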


\begin{proof}
The proof of this lemma closely follows \cite[Lemma 6]{Cameron2020Proj}. The main difference is that the condition on S has been relaxed, resulting in a refined lower bound for the number of columns of the random matrix in \cite[Corollary7]{Cameron2020Proj}.

In this context, we mainly focus on the proof that if $S$ satisfies the $(\frac{\epsilon}{6\sqrt{k}},\delta, \ell)$-JL moment property for any $\ell\geq 2$, then for any $\boldsymbol{x}\in \mathbb{R}^{n}$,
\begin{equation}
    \label{subspace-embedding}
    \mathbb{P}\left\{\left|\|\boldsymbol{x}^{T}A_{k}\|_{2}^{2}-\|\boldsymbol{x}^{T}A_{k}S\|_{2}^{2}\right|>\frac{\epsilon}{3}\|\boldsymbol{x}^{T}A_{k}\|_{2}^{2}\right\}<\delta,
\end{equation}
where $A_{k}$ is the optimal rank-$k$ approximation of $A$. That is, $S$ is the $\frac{\epsilon}{3}$-subspace embedding for $A_{k}$ with probability $\geq 1-\delta$.

Let $\boldsymbol{y}^{T}=\boldsymbol{x}^{T}A_{k}\in \mathbb{R}^{n}$ and substitute it into of (\ref{subspace-embedding}),
\begin{equation}
    \label{new-subspace-embedding}
    \mathbb{P}\left\{\left|\|\boldsymbol{y}^{T}\|_{2}^{2}-\|\boldsymbol{y}^{T}S\|_{2}^{2}\right|>\frac{\epsilon}{3}\|\boldsymbol{y}^{T}\|_{2}^{2}\right\}< \delta, \forall \boldsymbol{y}\in {\mathrm{range}}(A_{k}^{T}),
\end{equation}
If $\boldsymbol{y}=\boldsymbol{0}$, the probability inequality (\ref{new-subspace-embedding}) is obvious. If $\boldsymbol{y}\neq \boldsymbol{0}$, we can normalize $\boldsymbol{y}$ by dividing its norm, that is,
\begin{equation}
    \label{goal-result}
    \mathbb{P}\left\{\left|\|\boldsymbol{y}^{T}S\|_{2}^{2}-1\right|>\frac{\epsilon}{3}\right\}< \delta, \forall \boldsymbol{y}\in {\mathrm{range}}(A_{k}^{T})\  \text{and}\  \|\boldsymbol{y}\|_{2}=1.
\end{equation}

Since $S$ satisfies the $(\frac{\epsilon}{6\sqrt{k}},\delta,\ell)$-JL moment property, we have
\begin{equation}
    \label{JL-moment}
    \mathbb{E}_{S}\left|\|\boldsymbol{y}^{T}S\|_{2}^{2}-1\right|^{\ell}\leq\left(\frac{\epsilon}{6\sqrt{k}}\right)^{\ell}\delta=\left(\frac{\epsilon}{3}\right)^{\ell}\left(\frac{1}{2\sqrt{k}}\right)^{\ell}\delta< \left(\frac{\epsilon}{3}\right)^{\ell}\delta.
\end{equation}

\noindent The second inequality comes from $(1/2\sqrt{k})^{\ell}\leq 1$ for all $k\geq 1, \ell\geq 2$.
Based on the Markov inequality, we have
\begin{equation}
    \label{chebyshev-inequality}
    \mathbb{P}\left\{\left|\|\boldsymbol{y}^{T}S\|_{2}^{2}-1\right|>\frac{\epsilon}{3}\right\}\leq \left(\frac{\epsilon}{3}\right)^{-\ell}\mathbb{E}_{S}\left|\|\boldsymbol{y}^{T}S\|_{2}^{2}-1\right|^{\ell}.
\end{equation}

\noindent Thus, the goal result in (\ref{goal-result}) can be derived by substituting (\ref{JL-moment})  into (\ref{chebyshev-inequality}). 

For the remainder of the proof of this lemma, please refer to \cite[Lemma 6]{Cameron2020Proj}.
\qed
\end{proof}

\begin{lemma}
    \label{the_bound_of_q}
    For any matrix $A\in \mathbb{R}^{n\times n}$, let $S\in \mathbb{R}^{n\times q}$ be a random matrix with independent normal random variables entries $S_{ij}\sim \mathcal{N}(0,1)/\sqrt{q}$. If $q\geq 288k/(\epsilon^{2}\delta)$, then $\tilde{A}=AS$ is an $(\epsilon,0,k)$-PCPS of $A$ with probability $\geq 1-\delta$.
\end{lemma}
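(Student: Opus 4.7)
The plan is to deduce this lemma as a direct corollary of the preceding Lemma \ref{lemma_new1024}, which says that any matrix $S$ satisfying the $(\epsilon/(6\sqrt{k}), \delta', \ell)$-JL moment property yields an $(\epsilon, 0, k)$-PCPS with probability at least $1 - 4\delta'$. Hence it suffices to verify that a Gaussian $S$ with entries $\mathcal{N}(0, 1/q)$ satisfies the JL moment property with appropriate parameters, and then invoke Lemma \ref{lemma_new1024} with $\delta' = \delta/4$ to convert the resulting $1 - 4\delta'$ guarantee into the stated $1 - \delta$ bound.

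First, I would fix an arbitrary unit vector $\boldsymbol{x} \in \mathbb{R}^n$ and analyze the random variable $\|\boldsymbol{x}^T S\|_2^2$. Writing $\boldsymbol{x}^T S = (\boldsymbol{x}^T \boldsymbol{s}_1, \ldots, \boldsymbol{x}^T \boldsymbol{s}_q)$ where $\boldsymbol{s}_j$ is the $j$-th column of $S$, each coordinate $\boldsymbol{x}^T \boldsymbol{s}_j$ is $\mathcal{N}(0, 1/q)$ (since $\|\boldsymbol{x}\|_2 = 1$), so $q\|\boldsymbol{x}^T S\|_2^2$ follows a chi-squared distribution with $q$ degrees of freedom. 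This immediately gives
\begin{equation*}
\mathbb{E}\|\boldsymbol{x}^T S\|_2^2 = 1, \qquad \mathbb{E}\bigl|\|\boldsymbol{x}^T S\|_2^2 - 1\bigr|^2 = \operatorname{Var}\bigl(\chi_q^2/q\bigr) = \frac{2}{q}.
\end{equation*}

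Next, I would pick $\ell = 2$ in the JL moment definition and match parameters. To have $S$ satisfy the $(\epsilon/(6\sqrt{k}), \delta/4, 2)$-JL moment property, it suffices that
\begin{equation*}
\frac{2}{q} \;\le\; \left(\frac{\epsilon}{6\sqrt{k}}\right)^{2}\cdot \frac{\delta}{4} \;=\; \frac{\epsilon^{2}\delta}{144\,k},
\end{equation*}
which rearranges to $q \ge 288\,k/(\epsilon^{2}\delta)$, matching the hypothesis. Applying Lemma \ref{lemma_new1024} with $\delta$ replaced by $\delta/4$ then yields that $\tilde{A} = AS$ is an $(\epsilon, 0, k)$-PCPS of $A$ with probability at least $1 - 4(\delta/4) = 1 - \delta$.

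There is no serious obstacle here; the proof is essentially a parameter-chasing argument once the chi-squared second-moment identity is in hand. The only point requiring some care is the factor-of-$4$ bookkeeping between the $1 - 4\delta$ guarantee in Lemma \ref{lemma_new1024} and the $1 - \delta$ guarantee claimed here, which is precisely what produces the constant $288 = 4 \cdot 72$ in the lower bound on $q$ (rather than the $72k/(\epsilon^{2}\delta)$ that the raw variance computation would suggest).
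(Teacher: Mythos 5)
Your proposal is correct and follows essentially the same route as the paper's own proof: identify $q\|\boldsymbol{x}^{T}S\|_{2}^{2}$ as a $\chi^{2}(q)$ variable, use its variance $2/q$ to verify the $(\epsilon/(6\sqrt{k}),\delta/4,2)$-JL moment property under $q\geq 288k/(\epsilon^{2}\delta)$, and then invoke Lemma \ref{lemma_new1024} so the $1-4\cdot(\delta/4)$ guarantee becomes $1-\delta$. The factor-of-$4$ bookkeeping you flag is exactly what the paper does as well.
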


\begin{proof} For any given $\boldsymbol{x}\in \mathbb{R}^{n}$ and $\|\boldsymbol{x}\|_{2}=1$, let $S_{*j}$ denote the $j$-th column of matrix $S$, then
\begin{linenomath}
    $$q\|\boldsymbol{x}^{T}S\|_{2}^{2}=q\sum_{j=1}^{q}(\boldsymbol{x}^{T}S_{*j})^{2}=\sum_{j=1}^{q}(\sum_{i=1}^{n}x_{i}\sqrt{q}S_{ij})^{2}.$$
\end{linenomath}
Let $y_{j}\triangleq \sum_{i=1}^{n}x_{i}\sqrt{q}S_{ij},j=1,2,\ldots,q$, as $\sqrt{q}S_{ij}\sim N(0,1)$ we have  $\mathbb{E}y_{j}=0$, $\mathbb{D}y_{j}=1$, that is, $y_{j}\sim N(0,1)$. As $q\|\boldsymbol{x}^{T}S\|_{2}^{2}=\sum_{j=1}^{q}y_{j}^{2}$, then random variable $Y\triangleq q\|\boldsymbol{x}^{T}S\|_{2}^{2}\sim \mathcal{X}^{2}(q)$.

As $\mathbb{E}Y = q$, $\mathbb{D}Y=2q$, thus
\begin{linenomath}
    $$\mathbb{E}|\|\boldsymbol{x}^{T}S\|_{2}^{2}-1|^{2}=\frac{2}{q}.$$
\end{linenomath}
When $q\geq \dfrac{288k}{\epsilon^{2}\delta}$,
\begin{linenomath}
    $$\mathbb{E}|\|\boldsymbol{x}^{T}S\|_{2}^{2}-1|^{2}\leq \left(\frac{\epsilon}{6\sqrt{k}}\right)^{2}\frac{\delta}{4},$$
\end{linenomath}
that is, $S$ satisfies $(\frac{\epsilon}{6\sqrt{k}},\frac{\delta}{4},2)$-JL moment property.

From Lemma \ref{lemma_new1024}, matrix $AS$ is an $(\epsilon,0,k)$-PCPS of $A$ with probability at least $1-\delta$. 
\qed
\end{proof}

\subsection{Proof of Theorem \ref{the_error_bound_of_part2}}
\begin{proof}
Based on Lemma \ref{the_bound_of_q}, if $q\geq 288k/(\epsilon^{2}\delta)$, then with probability not less than $1-\delta$, $f(A)S$ is an $(\epsilon,0,k)$-PCPS of $A$.

Let $\mathcal{P}$ be the set of rank-$k$ orthogonal projections. Let
\begin{linenomath}
    $$\tilde{P}^{*}\triangleq \arg \min_{P\in \mathcal{P}}\|f(A)S-Pf(A)S\|_{F}= QQ^{T},$$
\end{linenomath}
and 
\begin{linenomath}
    $$P^{*}\triangleq\arg\min_{P\in \mathcal{P}}\|f(A)-Pf(A)\|_{F}=U_{k}U_{k}^{T}.$$
\end{linenomath}
Based on the Definition \ref{projection_cost_preserving_sketch}, the following two inequalities hold:
\begin{equation}
    \label{one_of_the_two_ineq}
    (1-\epsilon)\|f(A)-\tilde{P}^{*}f(A)\|_{F}^{2}\leq \|f(A)S-\tilde{P}^{*}f(A)S\|_{F}^{2},
\end{equation}
\begin{equation}
    \label{another_of_the_two_ineq}
    \|f(A)S-P^{*}f(A)S\|_{F}^{2}\leq (1+\epsilon)\|f(A)-P^{*}f(A)\|_{F}^{2}.
\end{equation}
As $\|f(A)S-\tilde{P}^{*}f(A)S\|_{F}^{2}\leq \|f(A)S-P^{*}f(A)S\|_{F}^{2}$, combine (\ref{one_of_the_two_ineq}) and (\ref{another_of_the_two_ineq})
\begin{linenomath}
    $$\|f(A)-QQ^{T}f(A)\|_{F}^{2}\leq \frac{1+\epsilon}{1-\epsilon}\|f(A)-P^{*}f(A)\|_{F}^{2}.$$
\end{linenomath}
Furthermore, we have
\begin{linenomath}
    \begin{align*}
    \|\Delta\|_{F}&=\|(I-QQ^{T})f(A)(I-QQ^{T})\|_{F}\\
    &\leq \|I-QQ^{T}\|_{2} \|f(A)-QQ^{T}f(A)\|_{F}\\
    &=\|f(A)-QQ^{T}f(A)\|_{F}\\
    &\leq \frac{\sqrt{1+\epsilon}}{\sqrt{k(1-\epsilon)}}\mathrm{tr}(f(A)),
\end{align*}
\end{linenomath}
the first inequality is based on the sub-multiplicativity property of Frobenius norm, the second inequality results from Lemma \ref{k_rank_app}.

Combined the error bound formulated in (\ref{the_error_bound_of_Delta}), we have
\begin{equation}
    \label{error2}
    \mathbb{P}\left\{|H^{N}(\Delta)-{\mathrm{tr}}(\Delta)|\leq \frac{\epsilon\sqrt{1+\epsilon}}{\sqrt{k(1-\epsilon)}}{\mathrm{tr}}(f(A))\right\}\geq 1-\delta.
\end{equation}
Thus, when $k\geq 16(1+\epsilon)/(1-\epsilon)$,
\begin{equation}
    \mathbb{P}\left\{|H^{N}(\Delta)-{\mathrm{tr}}(\Delta)|\leq 
    \frac{\epsilon}{4}{\mathrm{tr}}(f(A))\right\}\geq 1-\delta.
\end{equation} 
\qed
\end{proof}

\begin{remark} 
    The result in \cite[Corollary7]{Cameron2020Proj} is of significant theoretical importance. However, the bound  given ($q\geq c\cdot (k+\log(1/\delta))/\epsilon^{2} $, where $c$ is a sufficiently large universal constant) lacks explicit constants, providing little guidance for practitioners. In the Appendix, Lemma \ref{loose-bound} complements the proof of \cite[Corollary7]{Cameron2020Proj} and provides an explicit bound. However, this bound is much looser than the one given in Theorem \ref{the_bound_of_q}. One reason is that the conditions on $S$ specified in \cite[Lemma 6]{Cameron2020Proj} are more stringent than those stated in Lemma \ref{lemma_new1024}.
\end{remark}

\begin{remark}
    In this subsection, we use PCPS tricks to analyze the error bound. An analysis without using PCPS tricks is also presented in the Appendix. A comparison of the performance of these two methods will be conducted in Section \ref{subsec:with and without pcps}.
\end{remark}

\subsection{The error bound of the first part}
\label{subsec:total}
Computing $f(A)$ explicitly requires a full eigendecomposition with $\Omega(n^{3})$ time complexity, which can be expensive for large values of $n$. As a result, directly computing $\mathrm{tr}(Q^{T}f(A)Q)$, the first part of the equation (\ref{eq3}), is usually not feasible. Instead, we can use the $(m+1)$-step Lanczos quadrature approximation method to approximate the computation of $\mathrm{tr}(Q^{T}f(A)Q)$.

\begin{equation}
    \label{tr_Q_A_Q}
    \mathrm{tr}(Q^{T}f(A)Q) = \sum_{i=1}^{k}(Q_{*i})^{T}f(A)Q_{*i}= \sum_{i=1}^{k}(U^{T}Q_{*i})^{T}f(\Lambda)U^{T}Q_{*i},
\end{equation}
where $Q_{*i}$ denotes the $i$-th column of $Q$, $\|U^{T}Q_{*i}\|_{2}=1$.

\begin{lemma}
    Given $\epsilon \in (0,1)$, $A\in \mathbb{R}^{n\times n}$ is an SPD matrix with minimum eigenvalue $\lambda_{\min}\geq 1$. Let $S\in \mathbb{R}^{n\times q}$ be a random matrix with independent normal random variables entries $S_{ij}\sim \mathcal{N}(0,1)/\sqrt{q}$, and $Q\in \mathbb{R}^{n\times k}$ consists of $k$-principal orthonormal bases of the column space spanned by $f(A)S$. Let $L_{m'+1}(Q^{T}f(A)Q)$ denote the $(m'+1)$-step Lanczos quadrature approximation of $\mathrm{tr}(Q^{T}f(A)Q)$. If
    \begin{linenomath}
        $$m'\geq \frac{1}{2}\log\left(\frac{2kC_{\rho}}{\varepsilon }\right)/\log(\rho),$$
    \end{linenomath}
    where $M_{\rho}=|\log(\lambda_{\min}/2)|+\pi$, $\rho=(\lambda_{\max}+\sqrt{2\lambda_{\max}\lambda_{\min}-\lambda_{\min}^{2}})/(\lambda_{\max}\lambda_{\min})$ and $C=(n-1)f(\lambda_{\min})+f(\lambda_{\max}), C_{\rho}=4M_{\rho}/(C(\rho^{2}-\rho))$,
    then
    \begin{linenomath}
        $$\left|\mathrm{tr}(Q^{T}f(A)Q)-L_{m'+1}(Q^{T}f(A)Q)\right| \leq \frac{\epsilon}{2}\mathrm{tr}(f(A)).$$
    \end{linenomath}
\end{lemma}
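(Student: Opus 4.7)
The plan is to mirror the argument used for the Lanczos quadrature approximation error in Theorem \ref{the_Lanczos_error}, but applied deterministically to the $k$ columns of $Q$ rather than to $N$ random Rademacher vectors. The key observation is the decomposition already given in (\ref{tr_Q_A_Q}): since $\|U^T Q_{*i}\|_2 = 1$, every quadratic form $(Q_{*i})^T f(A) Q_{*i}$ admits a Riemann–Stieltjes representation with a probability-type measure $\mu^{(i)}$ analogous to the one defined in (\ref{measure_function}), supported on $[\lambda_{\min}, \lambda_{\max}]$. So the individual term $(Q_{*i})^T f(A) Q_{*i}$ is exactly the sort of integral handled by the $(m'+1)$-step Lanczos quadrature rule, yielding approximants $\sum_{j=0}^{m'}\tau_j^{(i)} f(\theta_j^{(i)})$, and $L_{m'+1}(Q^T f(A) Q)$ is defined to be their sum over $i = 1,\dots,k$.

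From here the proof proceeds in three short steps. First, I apply the triangle inequality to obtain
\begin{linenomath}
$$\left|\mathrm{tr}(Q^T f(A) Q) - L_{m'+1}(Q^T f(A) Q)\right| \leq \sum_{i=1}^{k}\left|\int_{\lambda_{\min}}^{\lambda_{\max}} f(t)\,d\mu^{(i)}(t) - \sum_{j=0}^{m'}\tau_j^{(i)} f(\theta_j^{(i)})\right|.$$
\end{linenomath}
Second, I reuse the affine transformation $t \mapsto \tfrac{\lambda_{\max}-\lambda_{\min}}{2}t + \tfrac{\lambda_{\max}+\lambda_{\min}}{2}$ together with the Bernstein-ellipse bound $M_\rho$ and elliptical radius $\rho$ already computed in the proof of Theorem \ref{the_Lanczos_error}; applying Lemma \ref{lemma_1} and the identity $E_{m+1}(f) = E_{m+1}(g)$ term by term gives
\begin{linenomath}
$$\left|\mathrm{tr}(Q^T f(A) Q) - L_{m'+1}(Q^T f(A) Q)\right| \leq \frac{4kM_\rho}{(\rho-1)\rho^{2m'+1}}.$$
\end{linenomath}
Note that the prefactor is $k$ rather than $n$ precisely because we are summing over the $k$ deterministic unit vectors $Q_{*i}$, not over $N$ Rademacher samples each scaled by the bound $\|(I-QQ^T)\boldsymbol{z}^{(i)}\|_2^2 \leq n$.

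Third, I translate the bound into the desired form using the monotonicity inequality $C = (n-1)f(\lambda_{\min}) + f(\lambda_{\max}) \leq \mathrm{tr}(f(A))$ established in Theorem \ref{the_Lanczos_error} and the definition $C_\rho = 4M_\rho/(C(\rho^2-\rho))$. Rewriting the upper bound as $\tfrac{kC_\rho}{\rho^{2m'-1}\cdot 1}\cdot C/(\rho\cdot\rho) \cdot \rho$ and unwinding, requiring $\tfrac{4kM_\rho}{(\rho-1)\rho^{2m'+1}} \leq \tfrac{\varepsilon}{2}C$ is equivalent to $\rho^{2m'} \geq 2kC_\rho/\varepsilon$, i.e., $m' \geq \tfrac{1}{2}\log(2kC_\rho/\varepsilon)/\log(\rho)$, which is exactly the hypothesis of the lemma. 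Combined with $C \leq \mathrm{tr}(f(A))$ this yields the stated bound $\tfrac{\varepsilon}{2}\mathrm{tr}(f(A))$.

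There is no real obstacle here: the entire argument is a deterministic specialization of the stochastic Lanczos quadrature bound proved in Section \ref{subsec:Lanczos}. The only subtlety worth stating carefully is the justification that each $(Q_{*i})^T f(A) Q_{*i}$ is a Riemann–Stieltjes integral against a unit-mass measure (because $Q_{*i}$ is a unit vector and $U$ is orthogonal), so that Lemma \ref{lemma_1} applies without any rescaling term in front — this is what replaces the $\|(I-QQ^T)\boldsymbol{z}^{(i)}\|_2^2 \leq n$ factor from the stochastic setting with the clean summation count $k$.
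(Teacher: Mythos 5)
Your proposal is correct and follows essentially the same route as the paper's own proof: expand $\mathrm{tr}(Q^{T}f(A)Q)$ column-by-column as $k$ unit-mass Riemann--Stieltjes integrals, apply the triangle inequality and Lemma \ref{lemma_1} with the same Bernstein-ellipse quantities $M_{\rho},\rho$ from Theorem \ref{the_Lanczos_error} to get the bound $4kM_{\rho}/((\rho-1)\rho^{2m'+1})$, and then use $C\leq\mathrm{tr}(f(A))$ together with the definition of $C_{\rho}$ to convert the hypothesis on $m'$ into the stated $\tfrac{\epsilon}{2}\mathrm{tr}(f(A))$ bound. The only cosmetic blemish is the garbled "rewriting" phrase in your third step, but the underlying equivalence $4kM_{\rho}/((\rho-1)\rho^{2m'+1})\leq\tfrac{\varepsilon}{2}C \iff \rho^{2m'}\geq 2kC_{\rho}/\varepsilon$ is right.
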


\begin{proof}
Let $\tilde{\boldsymbol{\mu}}^{(i)}=U^{T}Q_{*i}=[\tilde{\mu}_{1}^{(i)},\ldots,\tilde{\mu}_{n}^{(i)}]^{T}$ and substitute into (\ref{tr_Q_A_Q}),
we have
\begin{linenomath}
    \begin{align*}
        \mathrm{tr}(Q^{T}f(A)Q)&=\sum_{i=1}^{k}\sum_{j=1}^{n}f(\lambda_{j})(\tilde{\mu}_{j}^{(i)})^{2}\\
        &=\sum_{i=1}^{k}\int_{\lambda_{n}}^{\lambda_{1}}f(t)\mathrm{d}\tilde{\mu}^{(i)}(t)\\
         &\approx \sum_{i=1}^{k}\sum_{l=0}^{m'}\tilde{\tau}_{l}^{(i)}f(\tilde{\theta}_{l}^{(i)})\triangleq L_{m'+1}(Q^{T}f(A)Q),
    \end{align*}
\end{linenomath}
where $\{\tilde{\mu}^{(i)}(t), i=1,2,\ldots, k\}$ are measures of integration similar to (\ref{measure_function}),  and the $(m'+1)$ pairs of Gaussian nodes and weights are denoted by $\{(\tilde{\tau}_{l}^{(i)},\tilde{\theta}_{l}^{(i)})\}_{k=0}^{m'}$, which is consistent with  the description for (\ref{gauss_quadrature_approx}). 

Then the approximation error can be described  in the following form,
\begin{linenomath}
\begin{align}
    & \left|\mathrm{tr}(Q^{T}f(A)Q)-L_{m'+1}(Q^{T}f(A)Q)\right| \nonumber\\ 
    = & \left|\sum_{i=1}^{k}\left(\int_{\lambda_{n}}^{\lambda_{1}}f(t)\mathrm{d}\tilde{\mu}^{(i)}(t)- \sum_{l=0}^{m'}\tilde{\tau}_{l}^{(i)}f(\tilde{\theta}_{l}^{(i)})\right)\right| \nonumber\\ 
    \leq & \sum_{i=1}^{k}\left|\int_{\lambda_{n}}^{\lambda_{1}}f(t)\mathrm{d}\tilde{\mu}^{(i)}(t)- \sum_{l=0}^{m'}\tilde{\tau}_{l}^{(i)}f(\tilde{\theta}_{l}^{(i)})\right| \nonumber\\ 
    \leq & \frac{4kM_{\rho}}{(\rho-1)\rho^{2m'+1}}\leq \frac{\epsilon}{2}\mathrm{tr}(f(A)),  \label{the_first_error}
\end{align}
\end{linenomath}
where the second inequality results from Lemma \ref{lemma_1} and the last inequality is obtained by referring to the bound of $m$ as stated in (\ref{the_bound_of_m}) and satisfying
\begin{equation}
    m'\geq \frac{1}{2}\log\left(\frac{2kC_{\rho}}{\varepsilon }\right)/\log(\rho).
\end{equation}
\qed
\end{proof}

\subsection{Proof of Theorem \ref{the_main_result}}
\begin{proof}
Based on Theorem \ref{the_Lanczos_error} and Theorem \ref{the_error_bound_of_part2}, we have
\begin{linenomath}
    \begin{align*}
        1-\delta &\leq \mathbb{P}\left\{|H^{N}(\Delta)-{\mathrm{tr}}(\Delta)|\leq \frac{\varepsilon}{4}{\mathrm{tr}}(f(A))\right\}\\
    &\leq \mathbb{P}\left\{|H^{N}(\Delta)-{\mathrm{tr}}(\Delta)|+|H^{N}(\Delta)-L_{m+1}^{N}(\Delta)|\leq \frac{\varepsilon}{2}|\mathrm{tr}(f(A))|\right\}.
    \end{align*}
\end{linenomath}
By applying the triangle inequality, we have
\begin{equation}
    \label{the_second_error}
    \mathbb{P}\left\{|{\mathrm{tr}}(\Delta)-L_{m+1}^{N}(\Delta)|\leq \frac{\varepsilon}{2} |{\mathrm{tr}}(f(A))|\right\}\geq 1-\delta.
\end{equation}
From equation (\ref{eq3}), we can derive that 
${\mathrm{tr}}(f(A))={\mathrm{tr}}(Q^{T}f(A)Q)+{\mathrm{tr}}(\Delta)$. Let $\Gamma \triangleq L_{m'+1}(Q^{T}f(A)Q)+L_{m+1}^{N}(\Delta)$. By combining equations \eqref{the_first_error} and \eqref{the_second_error} and applying the triangle inequality again, we obtain the following equation,
\begin{linenomath}
    \begin{align*}
    \mathbb{P}\left\{ \right.& \left|{\mathrm{tr}}(f(A))-\Gamma\right|\\
    &\leq \left|\mathrm{tr}(Q^{T}f(A)Q)-L_{m'+1}(Q^{T}f(A)Q)\right|+\left|\mathrm{tr}(\Delta)-L_{m+1}^{N}(\Delta)\right| \\
    & \leq \epsilon \left|\mathrm{tr}(f(A))\right|\left.\right\}\geq 1-\delta.
    \end{align*}
\end{linenomath}
This completes the proof of Theorem 3.1. 
\qed
\end{proof}

\section{Numerical results}
\label{sec:performance}
In this section, we introduce and evaluate our proposed algorithm for estimating the log-determinant of a positive definite matrix $A\in \mathbb{R}^{n\times n}$. The pseudocode description of the algorithm is as follows, where Algorithm \ref{algorithm2} provides an estimation of $\mathrm{tr}(Q^{T}f(A)Q)$, while Algorithm \ref{algorithm2} returns the final estimation of $\log\det(A)$.
\begin{algorithm}[b!]
    \caption{Log-determinant estimation with Lanczos Quadrature Algorithm}
    \label{algorithm1}
    \begin{algorithmic}[1]
        \Require{PSD matrix $A\in \mathbb{R}^{n\times n}$, function $f$, rank of orthonormal subspace $k$. Lanczos iteration steps $m$. The number of Hutchinson query vectors $N$. The number of columns of the random matrix $q$.}
        \Require{$S\in \mathbb{R}^{n\times q}$ with i.i.d. $N(0,1)/\sqrt{q}$ Gaussian random entries.}  
        \Require{Orthonormal basis $Q\in \mathbb{R}^{n\times k}$ for the span of $f(A)S$.}
        \Require{$L_{m'+1}$, the estimation of $\mathrm{tr}(Q^{T}f(A)Q)$, returned by Algorithm \ref{algorithm2}.}
        \Ensure{Return $L_{m'+1}+\frac{1}{N}\tilde{L}_{m+1}^{N}$}
        \For{$i=1$ to $N$}
            \State{Generate a Rademacher random vector $\boldsymbol{z}_{i}$}
            \State{Form unit vector $\boldsymbol{v}_{i}=(I-QQ^{T})\boldsymbol{z}_{i}/\|(I-QQ^{T})\boldsymbol{z}{i}\|_{2}$}
            \State{$T={\mathrm{Lanczos}}(A,\boldsymbol{v}_{i},m+1)$ as \cite{Golub2009Matrices}}
            \State{$[Y,\Theta]={\mathrm{eig}}(T)$ and compute $\tau_{k}=[\boldsymbol{e}_{1}^{T}\boldsymbol{y}_{k}]^{2} $ for $k=0,1,\ldots,m$}
            \State{$\tilde{L}_{m+1}^{N}\leftarrow \tilde{L}_{m+1}^{N}+\|(I-QQ^{T})\boldsymbol{z}_{i}\|_{2}^{2}\sum_{k=0}^{m}\tau_{k}f(\theta_{k})$}
        \EndFor
    \end{algorithmic}
\end{algorithm}

\begin{algorithm}
    \caption{Approximate $\mathrm{tr}(Q^{T}f(A)Q)$ by $L_{m'+1}$}
    \label{algorithm2}
    \begin{algorithmic}[1]
        \Require{PSD matrix $A\in \mathbb{R}^{n\times n}$, function $f$, column orthonormal matrix $Q\in \mathbb{R}^{n\times k}$, Lanczos iteration steps $m'$.}
        \Ensure{Return $L_{m'+1}$}
        \For{$i=1$ to $k$}
        \State{Form unit vector $\boldsymbol{v}_{i}=Q(:,i)$}
        \State{$T={\mathrm{Lanczos}}(A,\boldsymbol{v}_{i},m'+1)$ as \cite{Golub2009Matrices}}
        \State{$[Y,\Theta]={\mathrm{eig}}(T)$ and compute $\tau_{k}=[\boldsymbol{e}_{1}^{T}\boldsymbol{y}_{k}]^{2} $ for $k=0,1,\ldots,m'$}
        \State{$L_{m'+1}\leftarrow L_{m'+1}+\sum_{k=0}^{m'}\tau_{k}f(\theta_{k})$}
        \EndFor
    \end{algorithmic}
\end{algorithm}

Next, we will evaluate the methods proposed in this text from three perspectives: 1) A performance comparison between the method with PCPS proposed in the main text and the method without PCPS provided in the appendix; 2) A performance comparison with methods proposed in related literature; and 3) A comparison between the experimental and theoretical parameter values when the algorithm reaches a specified probability error bound for a given matrix.

In the following numerical experiments, we set $\delta=0.1$ and uniformly sample the value of parameter $\epsilon$ in the interval $[0.01,0.2]$. The symmetric positive definite matrix $A$ used for algorithm evaluation is set in the following form:
\begin{equation}
    \label{example_matrix}
    A=I+\sum_{j=1}^{40}\frac{10}{j^{2}}\boldsymbol{x}_{j}\boldsymbol{x}_{j}^{T}+\sum_{j=41}^{300}\frac{1}{j^{2}}\boldsymbol{x}_{j}\boldsymbol{x}_{j}^{T},
\end{equation}
where $\boldsymbol{x}_{1},\ldots,\boldsymbol{x}_{300}\in \mathbb{R}^{5000}$ are generated in Matlab using $\textbf{sprandn}(5000,1,0.025)$. This example comes from \cite{Saibaba2017Random,Alice2021On,persson2022improved} and satisfies the condition that the minimum eigenvalue is not less than $1$. In order to facilitate the reproduction of the experimental results in this paper, we set the seed of $\textbf{rng(seed)}$ to 50.

\subsection{Performance comparison of methods with and without PCPS}
\label{subsec:with and without pcps}
The bounds of the algorithm-related parameters were provided in the previous text. The derivation and analysis of these bounds were combined with PCPS tricks. As mentioned earlier, another method that does not use PCPS is also provided in the appendix. Table \ref{table_1} lists all the designed parameters. Since the use of PCPS tricks does not affect the estimation of the trace in the second part, the relevant parameters ($m$ and $N$) are consistent in both methods.

Therefore, it is sufficient to compare only the relevant parameters of the first part and the number of MVM required for the first part. Figure \ref{with_and_without} (a) shows the relationship between the number of columns in the random matrix $S$ and the relative error tolerance parameter $\epsilon$. In the method without PCPS, the number of columns in the random matrix is only related to probability parameter $\delta$ and does not change with the changes in $\epsilon$. Figure \ref{with_and_without} (b) shows that the number of Lanczos iteration steps required by the method with PCPS in the first part is significantly less than that required by the method without PCPS.  The number of MVM required by the method with PCPS for the first part is $q+km'$, while for the method without PCPS, the number of MVM is $(k+p)(1+m')$. And the comparison of the numerical values is described in Figure \ref{with_and_without} (c).

\begin{table}[h]
    \caption{Comparison of parameters between methods with and without PCPS.}
    \label{table_1}%
    \begin{tabular}{@{}lll@{}}
        \toprule
        parameters & \bf with PCPS &  \bf without PCPS\\
        \midrule
        columns of Q & $k: k\geq \frac{16(1+\epsilon)}{1-\epsilon}$& $k+p: k\geq \frac{64}{\delta^{2}},p\geq 1+\frac{64k}{k\delta^{2}-64}$\\
		columns of S & $q: q\geq \frac{288k}{\epsilon^{2}\delta}$ & $k+p: k\geq \frac{64}{\delta^{2}},p\geq 1+\frac{64k}{k\delta^{2}-64}$\\
		Lanczos steps of first part & $m'\geq \frac{\log \frac{2kC_{\rho}}{\epsilon}}{2\log \rho}$ & $m'\geq \frac{\log \frac{2(k+p)C_{\rho}}{\epsilon}}{2\log \rho}$\\
        Lanczos steps of second part & \multicolumn{2}{c}{$m\geq \frac{1}{2}\log \left(\frac{4nC_{\rho}}{\epsilon }\right)/\log \rho$} \\
        Hutchinson queries & \multicolumn{2}{c}{$N\geq \frac{1}{2}(1+\sqrt{1+4\epsilon\sqrt{\delta}})^{2}/(\epsilon^{2}\delta)$} \\
        \bottomrule
    \end{tabular}
\end{table}

\begin{figure}[htbp]
    \subfigure[]{
    \begin{minipage}[t]{0.32\linewidth}
    \includegraphics[width=1\textwidth]{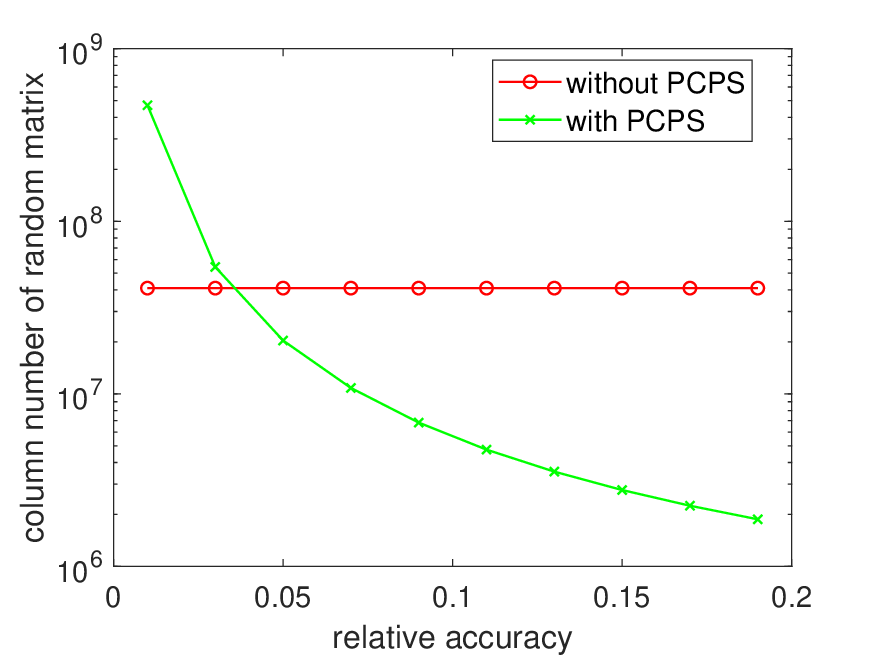}
    \end{minipage}%
    }%
    \subfigure[]{
    \begin{minipage}[t]{0.32\linewidth}
    \includegraphics[width=1\textwidth]{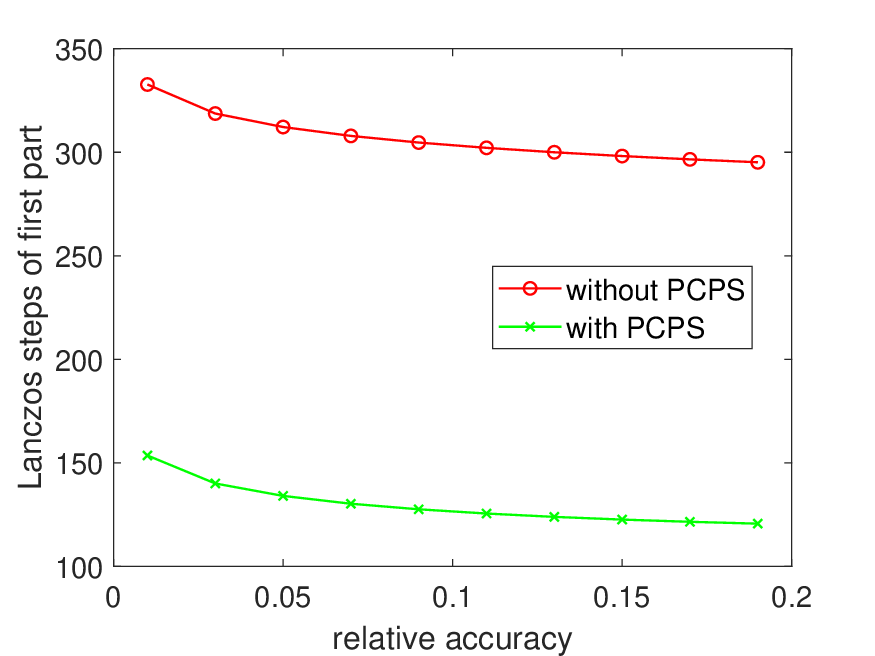}
    \end{minipage}%
    }%
    \subfigure[]{
    \begin{minipage}[t]{0.32\linewidth}
    \includegraphics[width=1\textwidth]{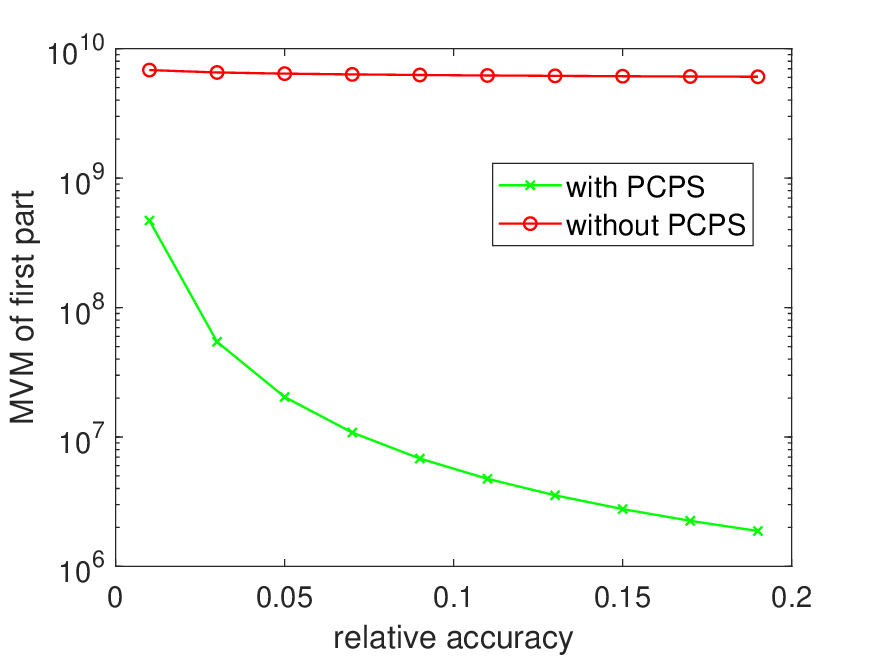}
    \end{minipage}
    }%
\centering
\caption{ Performance evaluations of methods with PCPS and without PCPS: (a) the column number of the random matrix $S$ vs. relative accuracy $\epsilon$, (b) Lanczos steps of the first part vs. relative accuracy $\epsilon$, (c) the total matrix-vector multiplications (MVM) of the first part vs. relative accuracy $\epsilon$.}
\label{with_and_without}
\end{figure}

\subsection{Comparison with other methods}
For the same matrix $A$ setting, we use the symbol $\Gamma_{(N,m)}$ to represent the estimation result of the estimator proposed in \cite{Ubaru2017Fast} and \cite{Alice2021On}, with $N$ being the number of query vectors and $m$ being the number of Lanczos steps.

To satisfy 
\begin{equation}
    \label{probability_inequality_of_Delta}
    \mathbb{P}\left\{\left|\mathrm{tr}(f(A))-\Gamma_{(N,m)}\right|\leq \epsilon|\mathrm{tr}(f(A))|\right\}\geq 1-\delta,
\end{equation}
it is shown in \cite[Corollary 4.5]{Ubaru2017Fast} that $N$ and $m$ should satisfy
\begin{equation}
    \label{the_bound_of_N_in_SLQ}
    N\geq \frac{24n^{2}}{\left(\epsilon \mathrm{tr}(f(A))\right)^{2}}\log^{2}(1+\kappa)\log\left(\frac{2}{\delta}\right),
\end{equation}
\begin{equation}
    m\geq \frac{\sqrt{3\kappa}}{4}\log\left(\frac{5\kappa n\log(2\kappa+2)}{\epsilon \mathrm{tr}(f(A))\sqrt{2\kappa+1}}\right),
\end{equation}
where $\kappa$ denotes the condition number of $A$. 

While, \cite[lemma 6] {Alice2021On} shows that the probability inequality in (\ref{probability_inequality_of_Delta}) holds when $N$ satisfies
\begin{equation}
    N\geq \frac{8}{\left(\epsilon \mathrm{tr}(f(A))\right)^{2}}(n\log^{2}\kappa +2\epsilon \mathrm{tr}(f(A))\log \kappa)\log\left(\frac{2}{\delta}\right),
\end{equation}
\begin{equation}
    m\geq \frac{\sqrt{\kappa+1}}{4}\log\left(\frac{4n(\sqrt{\kappa+1}+1)\log(2\kappa)}{\epsilon \mathrm{tr}(f(A))}\right).
\end{equation}
 The new bound reduces the number of required query vectors by a factor $n$ compared  to that in (\ref{the_bound_of_N_in_SLQ}).   

As shown in Tables \ref{table_2} and \ref{table_3}, compared to our lower bound presented in equation (\ref{the_bound_of_N}), the other two bounds require more matrix information and depend on the matrix dimension $n$, which will become very large as $n$ increases.

\begin{table}[t]
\caption{Comparison of the number of query vectors $N$ for different estimators.}\label{table_2}%
\begin{tabular}{@{}ll@{}}
\toprule
($\epsilon,\delta$) estimator & \bf lower bound of Rademacher queries $N$  \\
\midrule
\cite[Corollary 4.5]{Ubaru2017Fast} & $\frac{24}{\left(\epsilon \mathrm{tr}(f(A))\right)^{2}}n^{2}\log^{2}(1+\kappa)\log(\frac{2}{\delta})$ \\
\cite[lemma 6] {Alice2021On} & $\frac{8}{\left(\epsilon \mathrm{tr}(f(A))\right)^{2}}(n\log^{2}\kappa+2\epsilon \mathrm{tr}(f(A))
		\log\kappa)\log(\frac{2}{\delta})$  \\
this paper & $ \left(1+\sqrt{1+4\epsilon\sqrt{\delta}}\right)^{2}/(2\epsilon^{2}\delta)$ \\
\bottomrule
\end{tabular}
\end{table}

\begin{table}[t]
    \caption{Comparison of the Lanczos steps $m$ for different estimators.}
    \label{table_3}%
    \begin{tabular}{@{}ll@{}}
        \toprule
        ($\epsilon,\delta$) estimator & \bf lower bound of Lanczos steps $m$  \\
        \midrule
        \cite[Corollary 4.5]{Ubaru2017Fast} & $\frac{\sqrt{3\kappa}}{4}\log\left(\frac{5\kappa n\log(2\kappa+2)}{\epsilon \mathrm{tr}(f(A))\sqrt{2\kappa+1}}\right)$\\
		\cite[lemma 6] {Alice2021On} & $\frac{\sqrt{\kappa+1}}{4}\log\left(\frac{4n(\sqrt{\kappa+1}+1)\log(2\kappa)}{\epsilon \mathrm{tr}(f(A))}\right)$\\
        the first part &  $\frac{1}{2}\log \left(\frac{2kC_{\rho}}{\epsilon }\right)/\log \rho $ \\
        the second part & $ \frac{1}{2}\log\left(\frac{4nC_{\rho}}{\epsilon }\right)/\log \rho$ \\ 
        \bottomrule
    \end{tabular}
\end{table}

The bound of $N$ presented in \cite[lemma 6] {Alice2021On} is a simplified version of the result in \cite[Theorem 5] {Alice2021On}. We compare both in Figure \ref{compare_experiments}. As shown in Figure \ref{compare_experiments} (a), the conclusion in  \cite[Theorem 5] {Alice2021On} has the smallest sampling bound, followed by the conclusion in this paper. Since we divide the log-determinant problem into two parts, the Lanczos steps required for both parts are shown in Figure \ref{compare_experiments} (b). The comparison of the total MVM required for algorithm implementation is shown in Figure \ref{compare_experiments} (c), with \cite[Theorem 5] {Alice2021On} having the least MVM calculation, followed by this article. However, it should be noted that the result in \cite[Theorem 5] {Alice2021On} involves calculating the spectral norm and F-norm of matrix $(\log(A)-D_{\log(A)})$, where $D_{\log(A)}$ denotes the diagonal matrix containing the diagonal entries of $\log(A)$, which can be difficult to obtain for large-scale matrices.

\begin{figure}[htbp]
    \centering
    \subfigure[]{
    \begin{minipage}[t]{0.32\linewidth}
    \centering
    \includegraphics[width=1\textwidth]{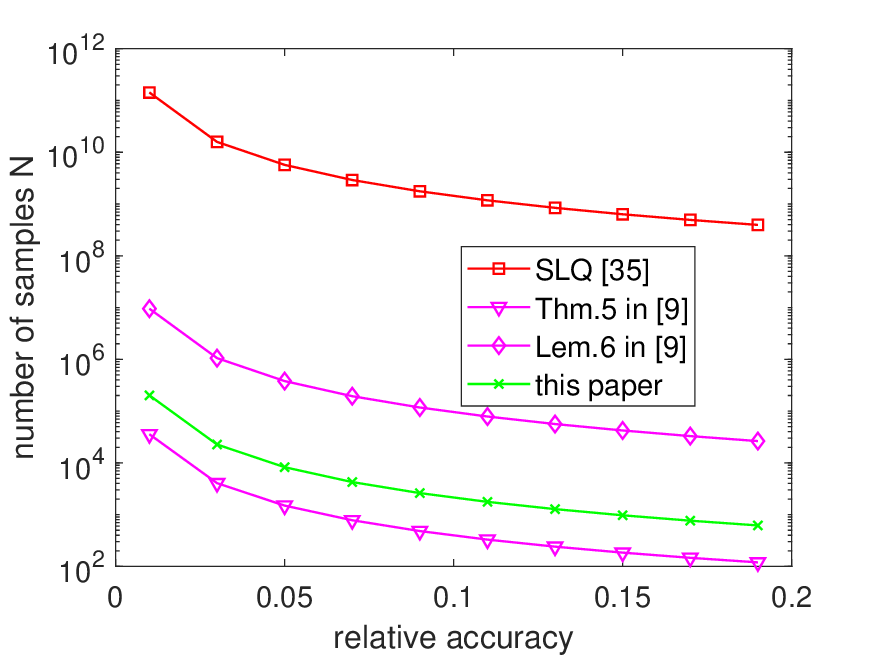}
    \end{minipage}%
    }%
    \subfigure[]{
    \begin{minipage}[t]{0.32\linewidth}
    \centering
    \includegraphics[width=1\textwidth]{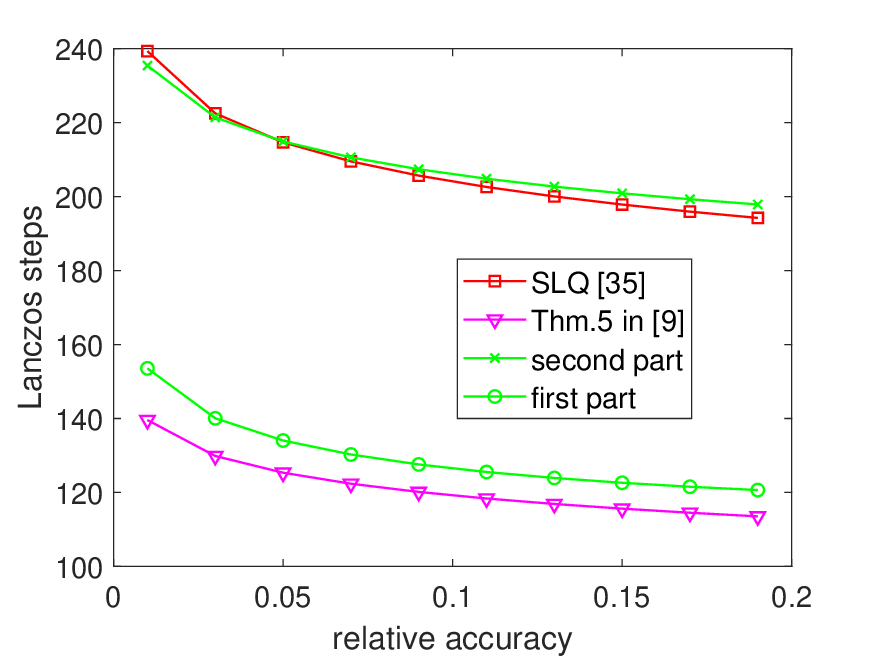}
    \end{minipage}%
    }%
    \subfigure[]{
    \begin{minipage}[t]{0.32\linewidth}
    \centering
    \includegraphics[width=1\textwidth]{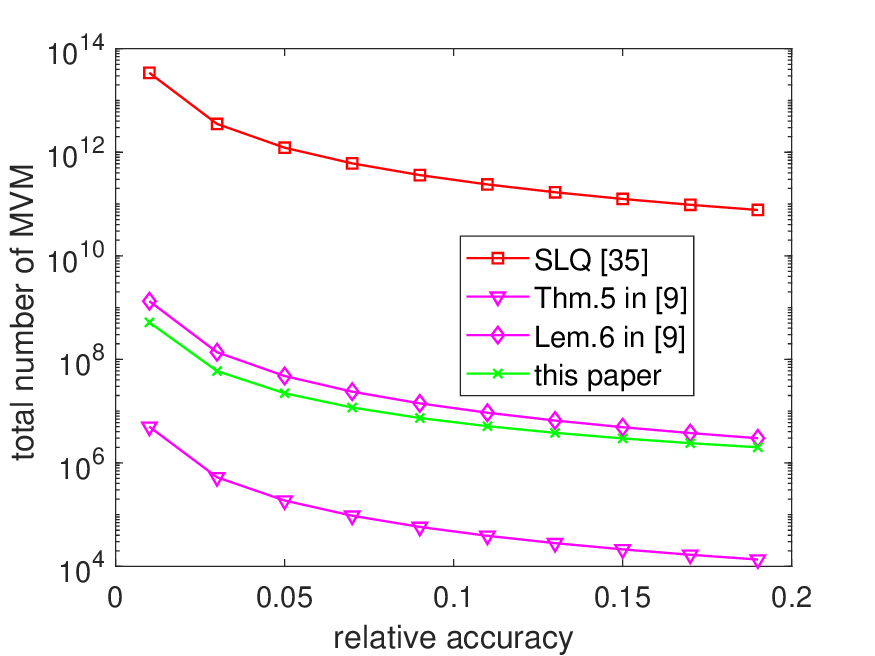}
    \end{minipage}
    }%
    \centering
    \caption{Performance evaluations of Algorithm \ref{algorithm1} and comparisons with \cite{Ubaru2017Fast} and \cite{Alice2021On}: (a) the number of Hutchinson query vectors vs. relative accuracy $\epsilon$, (b) Lanczos steps vs. relative accuracy $\epsilon$, (c) the total matrix-vector multiplication (MVM) vs. relative accuracy $\epsilon$.}
    \label{compare_experiments}
\end{figure}

\subsection{Theoretical and experimental values}
In this section, we use an example to verify the theoretical bounds given in the previous text. As shown in Table \ref{table_1}, the bound on the number of columns in $Q$ is relatively tight. However, the bound on the number of columns in the random matrix $S$ is not as tight. In our experiments, we observed that $q\approx 3k$ is sufficient for achieving high accuracy.

For the matrix presented in equation (\ref{example_matrix}), Figure \ref{theorem_experiments} shows that the experimental results perform much better than that predicted by our theoretical bounds. The number of Lanczos steps required for the first and second parts are depicted in Figure \ref{theorem_experiments} (a) and (b) respectively. Figure \ref{theorem_experiments} (c) shows the average number of MVM required by our algorithm over $100$ runs, with the shaded green area representing the $10$th to $90$th percentiles of these results. 

\begin{figure}[htbp]
    \centering
    \subfigure[]{
    \begin{minipage}[t]{0.32\linewidth}
    \centering
    \includegraphics[width=1\textwidth]{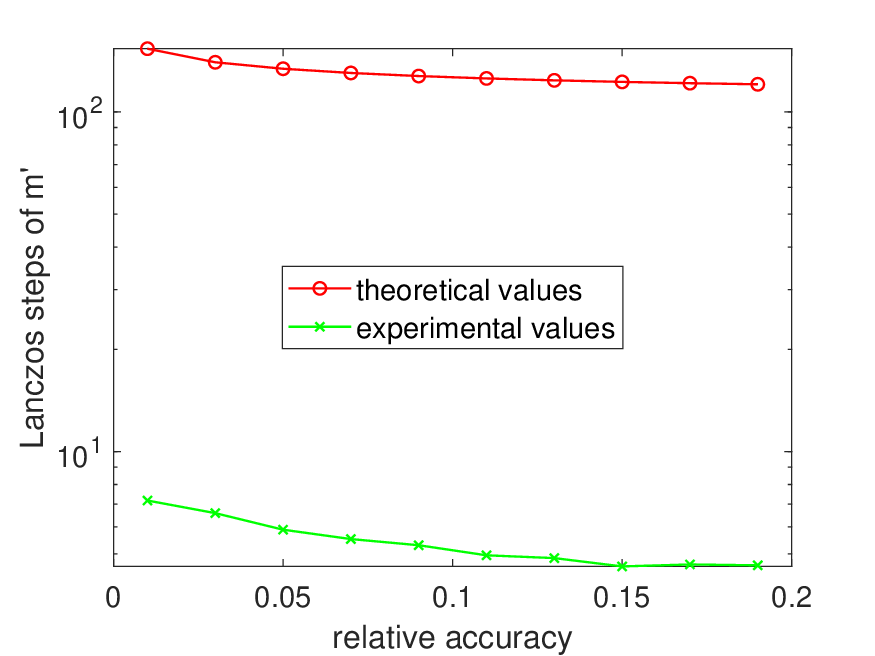}
    \end{minipage}%
    }%
    \subfigure[]{
    \begin{minipage}[t]{0.32\linewidth}
    \centering
    \includegraphics[width=1\textwidth]{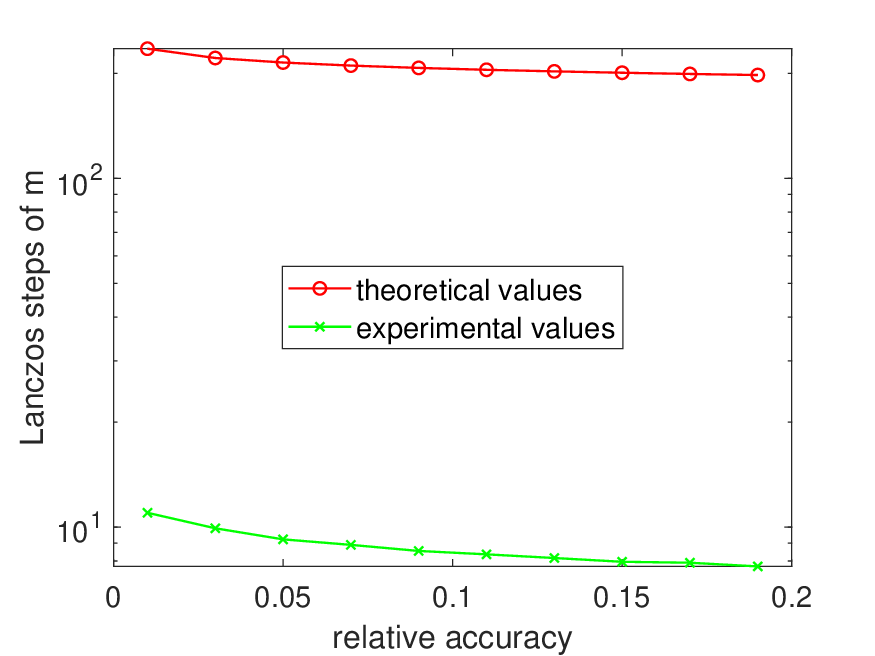}
    \end{minipage}%
    }%
    \subfigure[]{
    \begin{minipage}[t]{0.32\linewidth}
    \centering
    \includegraphics[width=1\textwidth]{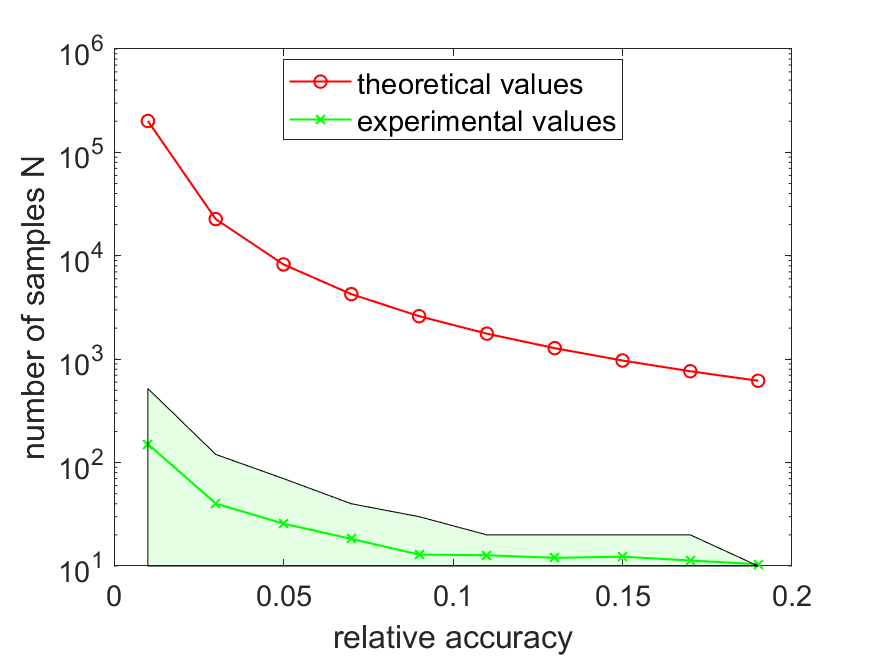}
    \end{minipage}
    }%
    \centering
    \caption{Comparison of theoretical and experimental values in a specific example: (a) Lanczos steps of the first part, (b) Lanczos steps of the second part, (c) the Hutchinson query vectors needed in the second part.}
    \label{theorem_experiments}
\end{figure}

\section{Conclusions}
\label{sec:conclusions}
In this paper, we analyze the error for approximating the log-determinant of large-scale positive definite matrices, where subspace projection and the stochastic Lanczos quadrature method are used. We provide a deterministic bound for the trace approximation of the projection part and a probabilistic bound for the remaining part. Unlike most literature that gives asymptotic upper or lower bounds, this paper presents explicit bounds for all of the algorithm-related design parameters.

Although the explicit bound for the number of columns in the random matrix $S$ may appear large, it is sufficient to guarantee the $(\epsilon,\delta)$ probability error bound for any large-scale matrix. Further research will also be conducted to explore the optimal bounds, filling the gap between the theoretical bounds and the actual requirements. Besides, this paper adopts a fixed-ratio error-bound allocation scheme, while we also present an optimized error allocation technique in \cite{li2023analysis} to reduce the overall MVM required by SLQ.

Note that this paper focuses on the computation of $\mathrm{tr}(f(A))$, where $f(\cdot)=\log(\cdot)$. However, the techniques and results presented in this paper have significant theoretical and computational implications for the trace estimation of other matrix functions, which would help solve those corresponding practical applications efficiently.

\begin{acknowledgements}
This work was funded by the natural science foundation of China (12271047); Guangdong Provincial Key Laboratory of Interdisciplinary Research and Application for Data Science, BNU-HKBU United International College (2022B1212010006); UIC research grant(R0400001-22; UICR0400008-21; R72021114); Guangdong College Enhancement and Innovation Program(2021ZDZX1046).
\end{acknowledgements}

\begin{appendices}
\section{Appendices}
\subsection{A looser bound of column number $q$}
\begin{lemma}
    \label{loose-bound}
    For any matrix $A\in \mathbb{R}^{n\times n}$, let $S\in \mathbb{R}^{n\times q}$ be a random matrix with entries $S_{ij} \sim \mathcal{N}(0,1)/\sqrt{q}$ are independent standard normal random variables. If $q \geq 1152e \frac{9^{k}}{\epsilon^{2}\delta}$, $\tilde{A}=AS$ is an $(\epsilon,0,k)$-PCPS of $A$ with probability $\geq 1-\delta$.
\end{lemma}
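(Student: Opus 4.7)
My plan is to follow the strategy of \cite[Corollary 7]{Cameron2020Proj}, which differs from the JL-moment route taken for Lemma \ref{the_bound_of_q} only in the way the $\epsilon$-subspace-embedding hypothesis is verified: I will use a pointwise $\epsilon$-net argument over the rank-$k$ subspace $\mathrm{range}(A_k^T)$ instead of the tighter direct JL-moment bound on that subspace. This swap is precisely what introduces the exponential $9^k$ factor and explains why the resulting threshold on $q$ is strictly looser than the one in Lemma \ref{the_bound_of_q}.

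First I would control a single unit vector. For any unit $\boldsymbol{x}\in\mathbb{R}^n$, the calculation already carried out in the proof of Lemma \ref{the_bound_of_q} gives $q\|\boldsymbol{x}^T S\|_2^2 \sim \chi^2(q)$, hence $\mathbb{E}\bigl(\|\boldsymbol{x}^T S\|_2^2-1\bigr)^2 = 2/q$, and Chebyshev then yields $\mathbb{P}\bigl(|\|\boldsymbol{x}^T S\|_2^2-1|>\tau\bigr)\le 2/(q\tau^2)$. This is the pointwise ingredient on which everything else rests.

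Next I would promote this to a subspace embedding of $V=\mathrm{range}(A_k^T)$, whose dimension is at most $k$. A standard volumetric argument furnishes a $1/4$-net $\mathcal{N}$ of the unit sphere of $V$ with $|\mathcal{N}|\le 9^k$. A union bound over $\mathcal{N}$, combined with a Vershynin-type extension $\|P_V(SS^T-I)P_V\|_{\mathrm{op}} \le (1-2\cdot\tfrac14)^{-1}\max_{\boldsymbol{x}\in\mathcal{N}}|\|\boldsymbol{x}^T S\|_2^2-1|$, upgrades a pointwise distortion $\tau$ to a uniform subspace-embedding distortion $2\tau$. Choosing $\tau$ proportional to $\epsilon$ and allotting roughly half of the failure budget $\delta$ to this event forces $q$ to satisfy a lower bound of the form $C\cdot 9^k/(\epsilon^2\delta)$ with an explicit $C$.

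Finally I would invoke the more stringent PCPS characterization underlying \cite[Lemma 6]{Cameron2020Proj}, which our Remark flags as having stricter conditions on $S$ than Lemma \ref{lemma_new1024}: it requires both an $\epsilon$-subspace embedding on $V$ (just obtained) and an approximate-matrix-multiplication bound on the residual $A-A_k$. The latter is controllable by the same second-moment JL calculation as in Lemma \ref{the_bound_of_q} and contributes only a polynomial, non-exponential term to $q$, so the $9^k$ factor continues to dominate. A union bound with budget $\delta/2$ for each of the two events yields the claim. The only real obstacle is bookkeeping: threading the choice of $\tau$, the net-extension factor, the two probability splits, and a Bernstein-type refinement of the chi-squared tail so that the final constant collapses exactly to $1152e$ — a calculation that is tedious but mechanical once the architecture above is in place.
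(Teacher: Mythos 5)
Your plan is correct in substance, but it is packaged differently from the paper's own proof, and the difference is worth spelling out. The paper never performs an explicit net argument: it computes the moment generating function of the centered $\chi^2(q)$ variable $q\|\boldsymbol{x}^TS\|_2^2-q$, deduces the moment bound $\mathbb{E}\bigl|\|\boldsymbol{x}^TS\|_2^2-1\bigr|^2\leq 2e(4/\sqrt{q})^2=32e/q$, and then simply checks that $q\geq 1152e\,9^k/(\epsilon^2\delta)$ makes $S$ satisfy \emph{both} JL-moment hypotheses of the original \cite[Lemma 6]{Cameron2020Proj}, namely the $(\epsilon/(6\sqrt{k}),\delta/4,2)$ property and the $(\epsilon/3,(\delta/4)/9^{k},2)$ property; the $9^k$ enters only through that second failure budget, and the cited lemma is then used as a black box (its internal proof is where the size-$9^k$ net lives). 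Your proposal essentially re-derives that internal mechanism by hand: pointwise Chebyshev with the exact variance $2/q$, a $1/4$-net of cardinality at most $9^k$ on the unit sphere of $\mathrm{range}(A_k^T)$, the standard factor-$2$ net-to-operator-norm extension, and separate moment bounds for the terms involving $A-A_k$. That route is valid, more self-contained, and in fact sharper: using the exact variance it needs only $q\gtrsim 144\cdot 9^k/(\epsilon^2\delta)$, which implies the stated threshold a fortiori, so your concern about making the bookkeeping ``collapse exactly to $1152e$'' (and the proposed Bernstein-type refinement) is unnecessary — the paper's specific constant is an artifact of its looser MGF-based moment bound, not something your argument must reproduce. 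Two cautions: first, the stringent hypothesis of \cite[Lemma 6]{Cameron2020Proj} is not exhausted by ``subspace embedding plus one approximate-matrix-multiplication bound''; its proof also needs Frobenius-norm (trace) preservation of the tail $A-A_k$ and head--tail cross-term bounds, all of which do follow from the same second-moment estimate but should be listed among the events in your union bound; second, if you bypass the JL-moment formulation and feed high-probability events directly into that machinery, you are re-proving part of the cited lemma rather than invoking it, so the write-up must make clear which statement is actually being applied.
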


\begin{proof} For any given $\boldsymbol{x}\in \mathbb{R}^{n}$ and $\|\boldsymbol{x}\|_{2}=1$, then random variable $Y\triangleq q\|\boldsymbol{x}^{T}S\|_{2}^{2}\sim \mathcal{X}^{2}(q)$ (as explained in lemma \ref{the_bound_of_q}).

Let $\tilde{Y}=Y-q$, then $\mathbb{E}\tilde{Y}=0$ and the moment generating function (MGF) of $\tilde{Y}$ is
\begin{linenomath}
    \begin{align*}
    \mathbb{E}\exp(\lambda \tilde{Y}) & = \mathbb{E}\exp(\lambda(Y-q))\\
    &=\mathbb{E}\exp(\lambda Y)\exp(-\lambda q)\\
    &=(1-2\lambda)^{-\frac{q}{2}}\exp(-\lambda q)\\
    &\leq \exp(4q\lambda^{2}), \text{for} |\lambda|\leq \frac{1}{\sqrt{5}}
\end{align*}
\end{linenomath}
the third equality is derived by the MGF of $\mathcal{X}^{2}(q)$.

Let $K=2\sqrt{q}$, then for all $\lambda$ if $|\lambda|\leq 1/K,(q\geq 2)$,
\begin{equation}
	\label{exp_eq}
	\mathbb{E}\exp(\lambda \tilde{Y})\leq \exp(K^{2}\lambda^{2}).
\end{equation}
For all $x\in \mathbb{R}$ and $r>0$, the following inequality is valid \cite{Roman2018High}
\begin{linenomath}
    $$|x|^{r}\leq r^{r}(\exp(x)+\exp(-x)).$$
\end{linenomath}
Substituting $x=\tilde{Y}/K$ and taking expectation, we have
\begin{linenomath}
\begin{align*}
    \mathbb{E}| \tilde{Y}/K|^{r}&\leq r^{r}(\mathbb{E}\exp( \tilde{Y}/K)+\mathbb{E}\exp(-\tilde{Y}/K))\\
    &\leq 2er^{r}
\end{align*}
\end{linenomath}
the second inequality holds by sustituting $\lambda=1/K$ into equation (\ref{exp_eq}).
Then 
\begin{linenomath}
\begin{align*}
    \mathbb{E}|\|\boldsymbol{x}^{T}S\|_{2}^{2}-1|^{r}\leq 2e(Kr/q)^{r}.
\end{align*}
\end{linenomath}
Specially, take $r=2$, if $q \geq 1152e \frac{9^{k}}{\epsilon^{2}\delta}$, it is easy verify the following inequality holds.
\begin{linenomath}
\begin{align*}
    2e(\frac{4}{\sqrt{q}})^{2}\leq (\frac{\epsilon}{3})^{2}\cdot \frac{\delta}{4}\cdot \min\{
    (\frac{1}{2\sqrt{k}})^{2},\frac{1}{9^{k}}\}\\
\end{align*}
\end{linenomath}
That is, $S$ satisfies both the $(\frac{\epsilon}{6\sqrt{k}},\frac{\delta}{4},2)$-JL moment property and the $(\frac{\epsilon}{3},\frac{\delta/4}{9^{k}},2)$-JL moment property. From Lemma 6 in \cite{Cameron2020Proj}, with probability at least $1-\delta$, $AS$ is an $(\epsilon,0,k)$-projection-cost-perserving sketch of $A$.
\qed
\end{proof}

\subsection{Without using PCPS}
In this section, we will provide a new analytical route for the probabilistic error bounds
\begin{linenomath}
    $$\mathbb{P}\left\{\left|H^{N}(\Delta)-\mathrm{tr}(\Delta)\right|\leq \frac{\epsilon}{4}\mathrm{tr}(f(A))\right\}\geq 1-\delta.$$
\end{linenomath}
Slightly different from (\ref{eq3}) with the projection matrix $Q\in \mathbb{R}^{n\times k}$, we have the following new decomposition form of $\mathrm{tr}(f(A))$,
\begin{equation}
    \begin{aligned}
	\mathrm{tr}(f(A))&=\mathrm{tr}(Q^{T}f(A)Q)+\mathrm{tr}((I-QQ^{T})f(A)(I-QQ^{T}))\\
	&= \mathrm{tr}(Q^{T}f(A)Q)+\mathrm{tr}(\tilde{\Delta}).
    \end{aligned}
\end{equation}
Let $\tilde{\Delta}\triangleq (I-QQ^{T})f(A)(I-QQ^{T})$, and $Q\in \mathbb{R}^{n\times (k+p)}$ is the $k+p$ orthogonal basis of the span of $f(A)S$, $S\in \mathbb{R}^{n\times (k+p)}$ is an standard Gaussian random matrix.

Similar to Theorem \ref{the_error_bound_of_part2}, we have the following lemma.
\begin{lemma}
    Given $\epsilon,\delta \in (0,1)$, an SPD matrix $A\in \mathbb{R}^{n\times n}$ with its minimum eigenvalue $\lambda_{\min}\geq 1$. Let $S\in \mathbb{R}^{n\times (k+p)}$ be a standard Gaussian matrix, and $Q\in \mathbb{R}^{n\times (k+p)}$ whose columns form an orthonormal basis of the range of $f(A)S$. If the target rank $k$ and oversampling parameter $p$ satisfy
    \begin{linenomath}
        $$k\geq \frac{64}{\delta^{2}}, p\geq 1+\frac{64k}{k\delta^{2}-64},$$
    \end{linenomath}
    then 
    \begin{equation}
		\mathbb{P}\left\{\left|H^{N}(\tilde{\Delta})-\mathrm{tr}(\tilde{\Delta})\right|\leq \frac{\epsilon}{4}\mathrm{tr}(f(A))\right\}\geq 1-\delta.
    \end{equation}
\end{lemma}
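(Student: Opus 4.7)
The plan is to mirror the structure of the proof of Theorem 4.4, but to replace the PCPS-based bound on $\|\tilde{\Delta}\|_F$ with a randomized SVD tail bound of Halko--Martinsson--Tropp (HMT) type. First I would note that $f(A)=\log(A)$ is symmetric PSD under the hypothesis $\lambda_{\min}\geq 1$, which lets me apply the standard HMT theory. Then, by sub-multiplicativity of the Frobenius norm and $\|I-QQ^{T}\|_{2}=1$,
\begin{linenomath}
$$\|\tilde{\Delta}\|_{F}\leq \|I-QQ^{T}\|_{2}\,\|f(A)-QQ^{T}f(A)\|_{F}\leq \|f(A)-QQ^{T}f(A)\|_{F},$$
\end{linenomath}
so it suffices to control the randomized range approximation error of $f(A)$.

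Next I would invoke the HMT expected-error bound for a standard $n\times(k+p)$ Gaussian sketch $S$ and $Q$ an orthonormal basis of the range of $f(A)S$:
\begin{linenomath}
$$\mathbb{E}\,\|f(A)-QQ^{T}f(A)\|_{F}^{2}\leq \Bigl(1+\tfrac{k}{p-1}\Bigr)\,\|f(A)-f(A)_{k}\|_{F}^{2}.$$
\end{linenomath}
Combining this with Lemma \ref{k_rank_app} (applied to the PSD matrix $f(A)$), which gives $\|f(A)-f(A)_{k}\|_{F}\leq \mathrm{tr}(f(A))/\sqrt{k}$, I obtain
\begin{linenomath}
$$\mathbb{E}\,\|\tilde{\Delta}\|_{F}^{2}\leq \frac{1}{k}\Bigl(1+\tfrac{k}{p-1}\Bigr)\,\mathrm{tr}(f(A))^{2}.$$
\end{linenomath}

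The central algebraic step is to convert the hypotheses on $k,p$ into a clean inequality: from $k\geq 64/\delta^{2}$ and $p-1\geq 64k/(k\delta^{2}-64)$, one rearranges to $(p-1)(k\delta^{2}-64)\geq 64k$ and then divides through by $(p-1)$ to deduce $1+k/(p-1)\leq k\delta^{2}/64$. Plugging this back yields $\mathbb{E}\|\tilde{\Delta}\|_{F}^{2}\leq (\delta^{2}/64)\,\mathrm{tr}(f(A))^{2}$. A Markov inequality at threshold $\mathrm{tr}(f(A))^{2}/16$ then gives
\begin{linenomath}
$$\mathbb{P}\bigl\{\|\tilde{\Delta}\|_{F}\geq \tfrac{1}{4}\mathrm{tr}(f(A))\bigr\}\leq \tfrac{\delta^{2}}{4}\leq \delta.$$
\end{linenomath}

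Finally I would combine this with Theorem \ref{the_hucthinson_estimation_error}, applied to $\tilde{\Delta}$, which states that for $N$ satisfying the bound (\ref{the_bound_of_N}) we have $|H^{N}(\tilde{\Delta})-\mathrm{tr}(\tilde{\Delta})|\leq \epsilon\|\tilde{\Delta}\|_{F}$ with probability at least $1-\delta$. A union bound (absorbing constants by halving $\delta$ in both sub-estimates) then produces
\begin{linenomath}
$$\mathbb{P}\Bigl\{|H^{N}(\tilde{\Delta})-\mathrm{tr}(\tilde{\Delta})|\leq \tfrac{\epsilon}{4}\mathrm{tr}(f(A))\Bigr\}\geq 1-\delta,$$
\end{linenomath}
which is the claim. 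The main obstacle I expect is the Step~2 tail bound: the HMT expectation estimate is standard, but carefully tracking how the two random events (quality of the random range and Hutchinson concentration) are combined with a single $\delta$ budget — and verifying that the slightly peculiar form of $p\geq 1+64k/(k\delta^{2}-64)$ drops exactly out of the algebra — is the delicate step. Everything else is either a direct application of an already-cited result or elementary manipulation.
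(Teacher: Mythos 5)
Your proposal is correct and follows essentially the same route as the paper's own appendix proof: bound $\|\tilde{\Delta}\|_{F}$ by the range-finder error $\|f(A)-QQ^{T}f(A)\|_{F}$, invoke the Halko--Martinsson--Tropp Frobenius-error bound together with Lemma~\ref{k_rank_app}, turn the hypotheses on $k,p$ into $1+k/(p-1)\leq k\delta^{2}/64$, apply Markov, and finish with Theorem~\ref{the_hucthinson_estimation_error}. The only (harmless) difference is that you apply Markov to the squared norm via the second-moment form of the HMT estimate, obtaining failure probability $\delta^{2}/4$ for the range event, whereas the paper applies Markov to the norm itself via the first-moment bound and obtains $\delta/2$; both hinge on the identical algebraic condition on $k$ and $p$.
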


\begin{proof} 
Based on Theorem 10.5 in \cite{Halko2011Finding}, we have
\begin{equation}
    \mathbb{E}\|f(A)-QQ^{T}f(A)\|_{F}\leq \left(1+\frac{k}{p-1}\right)^{1/2}\|f(A)-A_{k}(f)\|_{F}.
\end{equation}
Using Markov inequality, we have
\begin{equation}
    \mathbb{P}\left\{\|f(A)-QQ^{T}f(A)\|_{F}\leq \frac{\sqrt{k}}{4}\|f(A)-A_{k}(f)\|_{F}\right\}\geq 1-\frac{(1+\frac{k}{p-1})^{1/2}}{\sqrt{k}/4},
\end{equation}
where $k\geq 16$, $p\geq (17k-16)/(k-16)$.

From lemma \ref{k_rank_app}, we have $\|f(A)-A_{k}(f)\|_{F}\leq \mathrm{tr}(f(A))/\sqrt{k}$. Then
\begin{equation}
    \mathbb{P}\left\{\|f(A)-QQ^{T}f(A)\|_{F}\leq \frac{1}{4}\mathrm{tr}(f(A))\right\}\geq 1-\frac{(1+\frac{k}{p-1})^{1/2}}{\sqrt{k}/4}.
\end{equation}
And when the oversampling number $p$ satisfies
\begin{equation}
    p \geq 1+\frac{64k}{k\delta^{2}-64},
\end{equation}
where $k\geq 64/\delta^{2}$, we have
\begin{equation}
    \mathbb{P}\left\{\|f(A)-QQ^{T}f(A)\|_{F}\leq \frac{1}{4}\mathrm{tr}(f(A))\right\}\geq 1-\frac{\delta}{2}.
\end{equation}
As $\|\tilde{\Delta}\|_{F}\leq \|f(A)-QQ^{T}f(A)\|_{F}$. Based on Theorem \ref{the_hucthinson_estimation_error}, the following formulation is hold 
\begin{equation}
    \mathbb{P}\left\{\left|H^{N}(\tilde{\Delta})-\mathrm{tr}(\tilde{\Delta})\right|\leq \frac{\epsilon}{4}\mathrm{tr}(\log(A))\right\}\geq 1-\delta.
\end{equation}
with the number of columns of the Gaussian random matrix $S$ satisfies
\begin{equation}
    (k+p) \geq \frac{(k^{2}+k)\delta^{2}-64}{k\delta^{2}-64}.
\end{equation}
\qed
\end{proof}




\end{appendices}


\bibliographystyle{spmpsci}      
\bibliography{References}

\end{document}